\newtheorem{thm}{Theorem}[section]
\newtheorem{prop}[thm]{Proposition}
\newtheorem{lemma}[thm]{Lemma}
\newtheorem{csq}[thm]{Consequence}
\theoremstyle{definition}
\newtheorem{defi}[thm]{Definition}
\newtheorem{ex}[thm]{Example}
\newtheorem{nota}[thm]{Notation}
\newtheorem{rem}[thm]{Remark}
\newtheorem{const}[thm]{Construction}
\theoremstyle{remark}
\newcommand{\pic}{\mathrm{Pic}}
\newcommand{\h}{\mathrm{H}}
\newcommand{\spec}{\mathrm{Spec}}
\newcommand{\res}{\mathrm{R}_{k^\prime/k}}
\newcommand{\ext}{\mathrm{Ext}^1}
\title{On geometry of pseudo-reductive groups}
\author{Alexandre Lourdeaux}
\date{\today}
\begin{document}

\maketitle

\begin{abstract}
Basing ourselves on the work of Conrad-Gabber-Prasad, we study retract rationality of some pseudo-reductive groups and calculate the Picard groups of some pseudo-semisimple groups.

\paragraph{Keywords :} Imperfect fields, linear algebraic groups, Picard groups, pseudo-reductive groups, retract rationality, Weil restriction.

\paragraph{MSC classe(s) :} 20G15
\end{abstract}


	\section{Introduction}

\paragraph{Context.}

Pseudo-reductive groups generalize reductive groups. A smooth connected linear algebraic group $G$ over a field $k$ is \emph{pseudo-reductive} if $G$ does not contain any non-trivial subgroup defined over $k$ which is smooth, connected, normal and unipotent; in other words, the $k$-unipotent radical of $G$ is the trivial subgroup $1 \subseteq G$. For a smooth connected linear algebraic group, reductivity implies pseudo-reductivity. The converse holds over perfect base fields but is false when the base field is imperfect (\cite[Prop. 1.1.9 \& 1.1.10]{CGP}). Pseudo-reductive groups were first studied by Tits (years 1991-92 and 1992-93 of \cite{Tits}) and then Conrad, Gabber and Prasad pursued the work which led to the books \cite{CGP} and \cite{ConradPrasad_classification}. For an introduction to pseudo-reductive groups following \cite{CGP}, see the Bourbaki talk by Remy \cite{Remy_pseudoreductif}.

In this paper we will be interested in pseudo-reductive groups and \emph{pseudo-semisimple} groups (that is \emph{perfect} pseudo-reductive groups) of a certain form. The groups under study are those which can be expressed as a quotient of the Weil restriction of a reductive/semisimple groups by the Weil restriction of a central subgroup. Two geometric aspects are considered.

First of all, we study retract rationality. It happens that pseudo-reductive groups are not necessarily unirational over a separably closed field and may have infinite Picard groups (\cite[Ex. 5.14]{Rosengarten_picard}). But pseudo-semisimple groups are unirational and their Picard groups are finite and additive (that is, $\pic(G \times G)=\pic(G) \oplus (G)$ for such a group $G$) as those of reductive groups (\cite[Prop. 3.3]{Rosengarten_picard}). Retract rationality was introduced for fields by Saltman in \cite{Saltman} and by considering function fields of varieties one defines retract rationality for varieties. For a variety, retract rationality is weaker than rationality and than stable rationality, but it is stronger than unirationality. In practice, retract rationality is closer to rationality than to unirationality and it is often possible to state results for retract rational varieties primarily stated for rational varieties. However, retract rationality can be checked via a pratical criterion which makes it easier to show. To study rationality questions it is thus interesting to begin by studying retract rationality. This is what we do in Section \ref{section rationalite retractile}. This section is a first step in the study of retract rationality of pseudo-reductive/pseudo-semisimple groups.

In a second part, we study the groups of extensions by the multiplicative group for pseudo-semisimple groups. Since pseudo-semisimple groups are perfect, they are unirational (\cite[Prop. A.2.11]{CGP}), thus by \cite[Prop. 3.1]{Rosengarten_picard} their groups of extensions by $\mathbb{G}_m$ are equal to their Picard groups which are moreover finite. Using \cite[\S 5]{ConradPrasad_classification}, things work the same way as for semisimple groups with their \emph{universal cover}. Along this part, we shall need to determine the character groups of Weil restrictions of diagonalisable groups, so we propose a partial generalization of \cite[Th. II.2.4]{Oesterle} which is to be found in the appendix \ref{section caracteres weil}.

In Section \ref{sous section cas des groupes pseudo-semi-simples}, we are interested in the groups of extensions by the multiplicative group $\mathbb{G}_m$ for pseudo-semisimple groups. For this purpose we use \cite[\S 5]{ConradPrasad_classification} where Conrad and Prasad study central extensions of perfect groups. From \textit{loc. cit.} we derive Proposition \ref{prop groupe extensions pseudo ss} which leads us to two tangible applications : \begin{prop}[{Prop. \ref{prop inv groupes pss}}]
Let $k^\prime/k$ be a finite, purely inseparable field extension. Write $p$ for the characteristic exponent of $k$ and define $h$ to be the least non-negative integer such that $(k^\prime)^{p^h} \subseteq k$. Let $G^\prime$ be a (connected) semisimple, simply connected $k^\prime$-group and let $\mu^\prime$ be a central subgroup of $G^\prime$ assumed either \'etale or diagonalisable. Then, writing $G:=\res(G^\prime)/\res(\mu^\prime)$, one has an isomorphism  \[ \ext(G,\mathbb{G}_m) \simeq p^h \cdot \mathrm{Hom}_{k^\prime-\text{grp}}(\mu^\prime,\mathbb{G}_{m,k^\prime}) .  \]  
\end{prop}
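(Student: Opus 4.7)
The plan is to deduce the isomorphism from Proposition~\ref{prop groupe extensions pseudo ss} applied to $G=\res(G^\prime)/\res(\mu^\prime)$, and then to perform a character group computation for $\res(\mu^\prime)$ using the Oesterle-type result in the appendix.

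First, I would verify that $G$ is pseudo-semisimple: the Weil restriction $\res(G^\prime)$ is pseudo-reductive along the finite, purely inseparable extension $k^\prime/k$ by standard results of Conrad--Gabber--Prasad, and perfectness descends from the fact that $G^\prime$ is semisimple simply connected. Taking the quotient by the central subgroup $\res(\mu^\prime)$ preserves both properties, so Proposition~\ref{prop groupe extensions pseudo ss} applies to $G$.

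Next, the central extension
\[ 1 \longrightarrow \res(\mu^\prime) \longrightarrow \res(G^\prime) \longrightarrow G \longrightarrow 1 \]
yields, through the Conrad--Prasad analysis of central extensions of pseudo-semisimple groups that underlies Proposition~\ref{prop groupe extensions pseudo ss}, an identification of $\ext(G,\mathbb{G}_m)$ with a distinguished subgroup of $\mathrm{Hom}_{k-\text{grp}}(\res(\mu^\prime),\mathbb{G}_{m,k})$. The crucial input is that $G^\prime$ is simply connected, so that $\res(G^\prime)$ plays the role of a universal cover in the pseudo-semisimple category and carries no nontrivial extension by $\mathbb{G}_m$; this forces the identification to be an isomorphism onto the full character group.

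The remaining step is the computation of $\mathrm{Hom}_{k-\text{grp}}(\res(\mu^\prime),\mathbb{G}_{m,k})$ using the appendix's generalization of \cite[Th.~II.2.4]{Oesterle} to diagonalisable and étale group schemes. For $k^\prime/k$ purely inseparable of exponent $h$, the formula produces precisely $p^h \cdot \mathrm{Hom}_{k^\prime-\text{grp}}(\mu^\prime,\mathbb{G}_{m,k^\prime})$: the $p^h$ factor reflects the fact that, under the condition $(k^\prime)^{p^h}\subseteq k$, a character of $\mu^\prime$ descends to a $k$-character of the Weil restriction only if it is divisible by $p^h$ in the cocharacter lattice (the norm map $\res(\mathbb{G}_{m,k^\prime})\to\mathbb{G}_{m,k}$ restricts to $p^h$-power-like behavior on finite subgroups).

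The main obstacle will be handling the étale and diagonalisable cases uniformly in the last step: the diagonalisable case is treated directly via Cartier duality and the explicit form of the appendix result, while the étale case requires passing through the Galois module description of $\mu^\prime$ over a separable closure. One must also check that the character identification produced by the appendix is compatible with the natural map into $\ext(G,\mathbb{G}_m)$ arising from the $\mathrm{Hom}/\ext$ long exact sequence attached to the above central extension, so that the subgroup $p^h \cdot \mathrm{Hom}_{k^\prime-\text{grp}}(\mu^\prime,\mathbb{G}_{m,k^\prime})$ really surjects onto $\ext(G,\mathbb{G}_m)$ with trivial kernel.
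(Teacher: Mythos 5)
Your proposal is correct and follows essentially the same route as the paper: identify $1 \to \res(\mu^\prime) \to \res(G^\prime) \to G \to 1$ as the universal tame central extension (using that $G^\prime$ is simply connected), apply Proposition \ref{prop groupe extensions pseudo ss} to get $\ext(G,\mathbb{G}_m) \simeq \widehat{\res(\mu^\prime)}(k)$, and conclude by Theorem \ref{thm caracteres restriction weil}. The only inaccuracy is your parenthetical attributing the $p^h$ factor to the norm map — the appendix actually uses the $p^h$-th power map, which lands in $k$ precisely because $(k^\prime)^{p^h}\subseteq k$ (the norm on a purely inseparable extension is the $[k^\prime:k]$-th power, which can exceed $p^h$) — but this is a heuristic aside and does not affect the argument.
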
 and : \begin{thm}[{Th. \ref{prop rigidite pss}}]
Let $G$ be a standard pseudo-semisimple $k$-group which is pseudo-split. For any separable extension $K/k$ (finite or not), the scalar extension homomorphism $\ext(G,\mathbb{G}_m) \to \ext(G_K,\mathbb{G}_m)$ is an isomorphism.
\end{thm}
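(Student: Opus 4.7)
The plan is to apply Proposition \ref{prop inv groupes pss} to both $G$ and $G_K$, and to check that the data involved is unchanged by separable base change. First I would invoke the structure theory from \cite[\S 5]{ConradPrasad_classification} to present a standard pseudo-semisimple pseudo-split $k$-group $G$ in the form $\res_{k'/k}(G')/\res_{k'/k}(\mu')$, where $k'/k$ is a finite purely inseparable field extension, $G'$ is a $k'$-split semisimple simply connected group, and $\mu' \subseteq Z(G')$ is central. Since $G'$ is $k'$-split, the central subgroup $\mu'$ lies in a split maximal $k'$-torus, so $\mu'$ is diagonalisable and its character group $M := \mathrm{Hom}_{k'-\text{grp}}(\mu', \mathbb{G}_{m,k'})$ is a plain abelian group, free of any Galois action.

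With this setup, Proposition \ref{prop inv groupes pss} yields $\ext(G, \mathbb{G}_m) \simeq p^h \cdot M$, where $h$ is the minimal non-negative integer such that $(k')^{p^h} \subseteq k$. For the scalar extension, I would use that Weil restriction and quotients commute with the flat base change $\spec(K) \to \spec(k)$ to write $G_K \simeq \mathrm{R}_{k'_K/K}(G'_{k'_K})/\mathrm{R}_{k'_K/K}(\mu'_{k'_K})$ with $k'_K := k' \otimes_k K$. Separability of $K/k$ combined with pure inseparability of $k'/k$ implies that $k'_K$ is a field, purely inseparable over $K$, and that $k' \cap K = k$ in any common overfield. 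A short argument based on this then shows that the integer associated with $k'_K/K$ coincides with $h$: one inclusion is immediate from base change of $(k')^{p^h} \subseteq k$, and the reverse uses $k' \cap K = k$ to rule out a smaller exponent. Moreover, $\mu'_{k'_K}$ is still diagonalisable with character group $M$. A second application of Proposition \ref{prop inv groupes pss} hence yields $\ext(G_K, \mathbb{G}_m) \simeq p^h \cdot M$, and the functoriality built into the construction underlying Proposition \ref{prop groupe extensions pseudo ss} identifies the scalar extension map with the identity on $p^h M$, which finishes the proof.

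The main obstacle, as I see it, is twofold. First, matching a standard pseudo-semisimple pseudo-split $k$-group $G$ to the precise hypotheses of Proposition \ref{prop inv groupes pss}---and in particular arranging the central quotient in the form $\res_{k'/k}(\mu')$ for some central $\mu' \subseteq G'$---requires a careful use of the structure theory and of the pseudo-split hypothesis to avoid the commutative correction factor that appears in a general standard construction. Second, checking that the two isomorphisms produced by Proposition \ref{prop inv groupes pss}, for $G$ and for $G_K$, are actually compatible under scalar extension (and not merely abstractly isomorphic to the same abelian group $p^h M$) forces one to revisit the naturality of the construction in the base, which is the technical heart of the argument.
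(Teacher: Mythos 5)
There is a genuine gap, and it sits exactly where you flag your ``main obstacle'': the reduction of an arbitrary standard, pseudo-split, pseudo-semisimple $G$ to the form $\mathrm{R}_{k'/k}(G')/\mathrm{R}_{k'/k}(\mu')$ with a \emph{single} purely inseparable field $k'/k$ and a \emph{Weil-restricted} central subgroup. This reduction is not available in general. The structure theory (\cite[Th. 4.1.1, Prop. 4.1.4]{CGP}, not \cite[\S 5]{ConradPrasad_classification}) presents $G$ as $\widetilde{G}/Z$ with $\widetilde{G}=\prod_{i}\mathrm{R}_{k_i'/k}(G_i')$ a product over possibly several fields $k_i'$, and — more importantly — $Z$ is only known to be a central subgroup contained in $\prod_i \mathrm{R}_{k_i'/k}(Z_i')$; it arises as the kernel of the map $\phi$ to the commutative group $C$ in the standard data and need not be of the form $\prod_i\mathrm{R}_{k_i'/k}(\mu_i')$ for central $\mu_i'\subseteq G_i'$ (already inside one factor, e.g. the canonical copy of $\mu_{p,k}$ inside $\mathrm{R}_{k'/k}(\mu_{p^2,k'})$ is a central subgroup that is not a Weil restriction). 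Consequently Proposition \ref{prop inv groupes pss} does not apply to $G$, and the whole computation $\ext(G,\mathbb{G}_m)\simeq p^h M$ on which your argument rests is unavailable. This is precisely why the paper does not deduce Theorem \ref{prop rigidite pss} from Proposition \ref{prop inv groupes pss}: instead it uses Proposition \ref{prop groupe extensions pseudo ss}, valid for any perfect group, to identify $\ext(G,\mathbb{G}_m)\simeq\widehat{Z}(k)$ and $\ext(G_K,\mathbb{G}_m)\simeq\widehat{Z}(K)$ compatibly, and then proves directly that $\widehat{Z}(k)\to\widehat{Z}(K)$ is bijective. That last step is itself nontrivial: one shows $Z\cap Z_0$ (with $Z_0$ the product of the diagonalisable descents $Z_i$ of the $Z_i'$) is diagonalisable, that $Z/(Z\cap Z_0)$ sits in a unipotent group so that $\widehat{Z}(L)\hookrightarrow\widehat{(Z\cap Z_0)}(L)$ for all $L$, and then one kills the Galois action (separable algebraic case), the $\mathrm{Aut}(k(t)/k)$-action (purely transcendental case), and passes to a limit for general separable $K/k$.

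A secondary weakness: even in the special case $G=\mathrm{R}_{k'/k}(G')/\mathrm{R}_{k'/k}(\mu')$, your final step (``functoriality identifies the scalar extension map with the identity on $p^hM$'') is asserted rather than proved. The identification of Proposition \ref{prop inv groupes pss} passes through Theorem \ref{thm caracteres restriction weil}, and the compatibility of $\widehat{\mathrm{R}_{k'/k}(\mu')}(k)\to\widehat{\mathrm{R}_{k'_K/K}(\mu'_{k'_K})}(K)$ with these identifications is essentially the content of the statement $\widehat{Z}(k)=\widehat{Z}(K)$ that the paper has to prove; it should not be taken for granted. Your observation that $k'\otimes_k K$ is a field purely inseparable over $K$ with the same exponent $h$ is correct and would be a useful ingredient, but it does not by itself close either gap.
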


In Appendix \ref{section caracteres weil} we determine the character group of Weil restrictions $\res(G_{k^\prime})$ where $k^\prime/k$ is finite and purely inseparable, and $G/k$ is an algebraic group of a certain type. This is Theorem \ref{thm caracteres restriction weil}. It is used to complete calculations for Proposition \ref{prop inv groupes pss} and Theorem \ref{prop rigidite pss} from Section \ref{sous section cas des groupes pseudo-semi-simples}.

\paragraph{Content.} In Section \ref{section rationalite retractile} we review the basics of retract rationality following \cite{Merkurjev_retract} and study pseudo-reductive groups of a specific form to show they are retract rational. More precisely, \begin{thm}[{Th. \ref{prop groupe retract rationnel}}]
Let $k^\prime/k$ be a finite, purely inseparable field extension. Let $G^\prime$ be a (connected) split reductive $k^\prime$-group; let $T^\prime$ be a central $k^\prime$-subgroup of $G^\prime$. Then $\mathrm{R}_{k'/k}(G')/\mathrm{R}_{k'/k}(T^\prime)$ is retract rational over $k$.

If $G^\prime$ is not assumed split, then $\mathrm{R}_{k'/k}(G')/\mathrm{R}_{k'/k}(T^\prime)$ is retract rational over a separable closure of $k$.
\end{thm}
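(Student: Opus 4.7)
The plan is to realize $G=\res(G')/\res(T')$ birationally as an affine space times a small commutative piece using the Bruhat decomposition of the split reductive group $G'$, and then to argue retract rationality of that piece directly.

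Since $T'$ is central in $G'$ and $G'$ is split, $T'$ lies in some split maximal $k'$-torus $T'_{0}$; fix such a $T'_{0}$ and a pair of opposite Borels $B'^{\pm}=T'_{0}\ltimes U'^{\pm}$. The open Bruhat cell $\Omega'=U'^{-}T'_{0}U'^{+}$ is $k'$-isomorphic to $U'^{-}\times T'_{0}\times U'^{+}\cong\mathbb{A}^{N}_{k'}\times T'_{0}$ (the split unipotent $U'^{\pm}$ are $k'$-isomorphic to affine spaces), so Weil restriction yields an open subscheme $\res(\Omega')\cong\mathbb{A}^{N[k':k]}_{k}\times\res(T'_{0})$ of $\res(G')$. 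Since $\res(T'_{0})$ is a product of copies of $\res(\mathbb{G}_{m})$, each a dense open of an affine $k$-space and hence $k$-rational, $\res(G')$ is $k$-rational.

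The central subgroup $\res(T')$ acts trivially on the $\res(U'^{\pm})$ factors of $\res(\Omega')$ and by translation on the middle factor $\res(T'_{0})$, so an open subvariety of $G$ is $k$-isomorphic to $\res(U'^{-})\times H\times\res(U'^{+})$ with $H:=\res(T'_{0})/\res(T')$. Hence $G$ is $k$-birational to $\mathbb{A}^{M}_{k}\times H$, and by birational invariance of retract rationality combined with the rationality of affine spaces, it suffices to prove $H$ is retract rational. Decomposing the diagonalizable group $T'$ as $T'^{\circ}\times F'$ (maximal sub-torus times finite diagonalizable part) and choosing a split complement $S\subseteq T'_{0}$ of $T'^{\circ}$, the torus factor contributes the $k$-rational factor $\res(S)$ and reduces the question to retract rationality of a commutative quotient involving $\res(F')$; this last piece is tackled either by producing a rational section of the relevant $\res(F')$-torsor or by directly verifying Saltman's lifting criterion.

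For the second assertion, base-change to the separable closure $k_{s}$: the reductive $(k'\otimes_{k}k_{s})$-group $G'_{k_{s}}$ splits over a finite separable extension, which is absorbed in $k_{s}$, so the first assertion applied over $k_{s}$ yields retract rationality of $G_{k_{s}}$ over $k_{s}$. The main expected obstacle lies in the final commutative step when $F'$ is non-trivial infinitesimal (e.g.\ $F'=\mu_{p^{n}}$ in characteristic $p$, the relevant case since $k'/k$ is purely inseparable): then the exact sequence $1\to T'\to T'_{0}\to T'_{0}/T'\to 1$ may fail to split as $k'$-groups, so $\res(T'_{0})\to\res(T'_{0})/\res(T')$ need not admit a regular section, and a separate argument tailored to $\res(\mu_{p^{n}})$-torsors is needed to produce the required rational section (or to check Saltman's criterion).
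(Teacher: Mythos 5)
Your reduction via the open Bruhat cell is sound as far as it goes: the image of $\res(\Omega')\cong\res(U'^-)\times\res(T'_0)\times\res(U'^+)$ in $G$ is a dense open subvariety isomorphic to $\res(U'^-)\times\bigl(\res(T'_0)/\res(T')\bigr)\times\res(U'^+)$, and since retract rationality is a birational invariant stable under multiplying by affine spaces, the theorem does reduce to retract rationality of $H=\res(T'_0)/\res(T')$. But at that point you stop: you yourself identify the case where $T'$ has an infinitesimal factor $\mu_{p^r}$ as ``the main expected obstacle'' and say that ``a separate argument tailored to $\res(\mu_{p^n})$-torsors is needed,'' without supplying it. Since $k'/k$ is purely inseparable, this is not an edge case but the \emph{entire} content of the theorem — for $T'$ a torus or \'etale, $H$ is already rational (respectively, easily handled), and the whole difficulty of the statement is concentrated in lifting $\res(\mu_{p^r})$-torsors from $\bar A$ to $A$ for a local $k$-algebra $A$. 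A proof that reduces to the hard case and then defers it is not a proof.

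The paper closes exactly this gap, and the key input is Lemma \ref{suite}: the short exact sequence
\[ 1 \to \mathrm{R}_{k'/k}(\mu_{q,k'}) \to \mathrm{R}_{k'/k}(\mathbb{G}_m) \to \mathrm{R}_{k(k'^{q})/k}(\mathbb{G}_m) \to 1, \qquad q=p^r, \]
where the second map is the $q$-th power map landing in the Weil restriction from the \emph{smaller} field $k(k'^q)$. Taking flat cohomology over a local $k$-algebra $A$ and over its residue field, and using that $\h^1_{\textit{fl}}(A,\mathrm{R}_{k'/k}(\mathbb{G}_m))$ vanishes (Lemmas \ref{lemme pss rr} and \ref{rappel pss rr}: $A\otimes_k k'$ is semilocal, so its Picard group is trivial), one presents $\h^1_{\textit{fl}}(A,\mathrm{R}_{k'/k}(\mu_{q,k'}))$ as a quotient of $(A\otimes_k k(k'^q))^\ast$; surjectivity of $\h^1_{\textit{fl}}(A,\cdot)\to\h^1_{\textit{fl}}(\bar A,\cdot)$ then follows from the $k$-rationality of the torus $\mathrm{R}_{k(k'^{q})/k}(\mathbb{G}_m)$ via Consequence \ref{retract homogene}. (The paper packages the whole argument as a five-lemma chase on the fppf cohomology sequence of $1\to\res(T')\to\res(G')\to G\to 1$ against Merkurjev's criterion, Proposition \ref{prop retract merkurjev}, rather than passing through the Bruhat cell, but the torsor-lifting step above is the irreducible core either way.) To complete your write-up you would need to prove this lemma, or an equivalent device giving you explicit control of $\res(\mu_{p^r})$-torsors over local rings; without it the argument is incomplete.
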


\paragraph{Conventions, notation and vocabulary.} The letter $k$ denotes the ground field over which (almost) every algebro-geometric object of the paper is defined. We fix an algebraic closure $\bar{k}$ of $k$, and a sperable closure $k_s \subseteq \bar{k}$.

Let $K$ be a field. By \emph{algebraic $K$-group} we mean a group scheme of finite type over $K$ and by \emph{linear algebraic $K$-group} an affine group scheme of finite type over $K$. The $K$-character group of an algebraic $K$-group $G$ is denoted by $\widehat{G}(K)$. A \emph{$K$-variety} is a separated and geometrically integral $K$-scheme of finite type over $K$. Given a finite field extension $K/k$ and a (quasi-projective) $K$-scheme $X$, $\mathrm{R}_{K/k}(X)$ means the Weil restrion of $X$ through the extension $K/k$ (it is indeed a scheme by \cite[\S 7.6]{BLR}).

We recall some basic definitions from \cite{CGP} to be used below. A \emph{pseudo-reductive} $k$-group is a smooth connected linear algebraic $k$-group which has a trivial unipotent $k$-radical. A \emph{pseudo-semisimple} group is a perfect pseudo-reductive group. A smooth linear algebraic $k$-group is called \emph{pseudo-split} if it contains a maximal $k$-torus which is $k$-split.

\paragraph{Acknowledgments.}  The present paper is part of the author's PhD thesis. It was done at Institut Camille Jordan (Lyon 1 University, France). The author is deeply thankful to Philippe Gille.


	\section{Retract rationality of some pseudo-reductive groups}
	\label{section rationalite retractile}

In this section we review some basics on retract rationality following \cite{Merkurjev_retract}. Then we prove that this property is satisfied by a particular family of pseudo-reductive groups. The base field is $k$.

			\subsection{Retract rational varieties}

\begin{defi}
Let $l/k$ be a field extension. A $k$-variety $X$ is called \emph{$l$-retract rational} or \emph{retract rational over $l$} if there exist \begin{itemize}
\item  a non-empty open subset $U$ of an affine space $\mathbb{A}_l ^n$,
\item a non-empty open subset $V$ of $X_l$ and,
\item two morphisms $r : U \to V$, $s : V \to U$ defined over $l$
\end{itemize} such that $r \circ s = \mathrm{id}_V$.
\end{defi}

For an arbitrary local ring $A$, denote by $\bar{A}$ the residue field of $A$.

\begin{prop}[{\cite[Prop. 3.1]{Merkurjev_retract}}]
\label{prop retract merkurjev}
Let $X$ be a $k$-variety. The following assertions are equivalent :
\begin{enumerate}
\item $X$ is retract rational over $k$;
\item For every local $k$-algebra $A$, there exists a non-empty open subset $V \subseteq X$ such that $V(A) \to V(\bar{A})$ is onto;
\item For every local $k$-algebra with $\bar{A}$ infinite, there exists a non-empty open subset $V \subseteq X$ such that  $V(A) \to V(\bar{A})$ is onto;
\item For every local $k$-algebra $A$ with $\bar{A}\simeq k(X)$ as $k$-algebras, the generic point of $X$ belongs to the image of $X(A) \to X(\bar{A})$.
\end{enumerate}
\end{prop}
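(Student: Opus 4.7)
The plan is to establish the cyclic chain of implications $(1) \Rightarrow (2) \Rightarrow (3) \Rightarrow (4) \Rightarrow (1)$. The first three are short and the genuine content lies in $(4) \Rightarrow (1)$.

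For $(1) \Rightarrow (2)$, fix a retract structure $r : U \to V$, $s : V \to U$ with $U \subseteq \mathbb{A}_k^n$ and $V \subseteq X$ non-empty open. Given a local $k$-algebra $A$ with residue field $\bar A$ and a point $\bar v \in V(\bar A)$, set $\bar u := s(\bar v) \in U(\bar A)$. Writing $U$ locally as a principal open $D(f)$ in $\mathbb{A}_k^n$ with $f(\bar u) \neq 0$, any lift of the coordinates of $\bar u$ to $A^n$ has $f$-value a unit in $A$ by locality, hence defines a point $u \in U(A)$ reducing to $\bar u$; then $r(u) \in V(A)$ reduces to $\bar v$. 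The implication $(2) \Rightarrow (3)$ is immediate. For $(3) \Rightarrow (4)$, given $A$ local with $\bar A \simeq k(X)$ (which is infinite as soon as $\dim X > 0$, the case $\dim X = 0$ being trivial since then $X = \mathrm{Spec}(k)$), condition (3) produces an open $V \subseteq X$ with $V(A) \to V(\bar A)$ surjective, and the generic point of $X$ lies in $V(\bar A)$ and so lifts.

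For the main implication $(4) \Rightarrow (1)$, choose an affine open $X_0 \subseteq X$ with a closed immersion $\iota : X_0 \hookrightarrow \mathbb{A}_k^n$. Let $\eta$ denote the generic point of $X$ and set $A := \mathcal{O}_{\mathbb{A}_k^n,\, \iota(\eta)}$: this is a local $k$-algebra whose residue field is canonically $k(X)$. By (4), the generic point, viewed as $\eta \in X(k(X)) = X(\bar A)$, lifts to a morphism $\alpha : \mathrm{Spec}(A) \to X$. Since $X$ is of finite type and $\mathbb{A}_k^n$ is Noetherian, $\alpha$ spreads out to a morphism $\widetilde r : U \to X$ defined on some Zariski open $U \subseteq \mathbb{A}_k^n$ containing $\iota(\eta)$. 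Set $s := \iota$ and $W := s^{-1}(U) \subseteq X_0$, a non-empty open of $X$. The two morphisms $\widetilde r \circ s|_W : W \to X$ and the inclusion $W \hookrightarrow X$ agree at the generic point by construction of $\alpha$; since $W$ is integral and $X$ is separated, they coincide on $W$. Replacing $U$ by $U' := \widetilde r^{-1}(W) \cap U$ (which still contains $\iota(\eta)$ and, since $\widetilde r \circ s = \mathrm{id}_W$ on $W$, also contains $s(W)$), the morphisms $r := \widetilde r|_{U'} : U' \to W$ and $s|_W : W \to U'$ realize the retract structure demanded by (1), with $V := W$.

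The only genuine obstacle is $(4) \Rightarrow (1)$, where the abstract lifting datum must be converted into a geometric rational retraction. The key trick is to choose $A$ to be the local ring of an ambient affine space at the generic point of $X$, so that $\bar A$ equals $k(X)$ exactly; the spreading-out principle for morphisms into a finite-type scheme then promotes the resulting $A$-valued point to an honest morphism from a Zariski open of $\mathbb{A}_k^n$, and composition with the affine embedding of $X_0$ produces the desired retraction.
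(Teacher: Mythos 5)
The paper does not prove this proposition; it is quoted directly from Merkurjev \cite[Prop.\ 3.1]{Merkurjev_retract} (see also \cite[Prop.\ 1.2]{ColliotSansuc_rationality}), and your argument is essentially the standard proof given there: the easy implications are handled by lifting coordinates through the local ring, and $(4)\Rightarrow(1)$ by taking $A$ to be the local ring of an ambient affine space at the image of the generic point and spreading out the resulting $A$-point to a morphism on a Zariski open, which retracts onto an open of $X$. Your write-up is correct, including the points that need care (the equalizer argument using separatedness and integrality of $W$, and the shrinking of $U$ so that $s(W)$ lands back in it).
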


Actually, from the proof of \cite[Prop. 3.1]{Merkurjev_retract} or the proof of \cite[Prop. 1.2]{ColliotSansuc_rationality}, one can extract a more specific criterion :
\begin{lemma}
Let $X$ be a $k$-variety. Let $A=k[X_1,\cdots,X_m]_\mathfrak{p}$ be a polynomial $k$-algebra localized in a prime ideal $\mathfrak{p}$ such that $\bar{A} \underset{k-alg}{\simeq} k(X)$. Then $X$ is retract rational over $k$ if, and only if, there exists a non-empty open subscheme $V \subseteq X$ such that $V(A) \to V(\bar{A})$ is onto. \hfill \qed
\end{lemma}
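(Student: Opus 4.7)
The plan is to prove the two directions separately. The forward implication is an immediate application of Proposition \ref{prop retract merkurjev}(2): if $X$ is retract rational over $k$, then for every local $k$-algebra---and in particular for this $A$---there exists a non-empty open $V \subseteq X$ making $V(A) \to V(\bar{A})$ surjective. The substance of the lemma is that the converse surjectivity check, applied to the single distinguished ring $A$, already suffices to ensure retract rationality.

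For the converse, assume $V \subseteq X$ is a non-empty open with $V(A) \to V(\bar{A})$ surjective. Since $\bar{A} \simeq k(X)$ as $k$-algebras, the generic point $\eta$ of $X$ lies in $V$ and corresponds to a $\bar{A}$-point of $V$, so by hypothesis it lifts to a morphism $\tilde{\eta}: \spec A \to V$. Now $A$ is the filtered colimit of the coordinate rings of the affine open neighborhoods $U_\alpha \subseteq \mathbb{A}^m_k$ of $\mathfrak{p}$, while $V$ is of finite presentation over $k$; a standard spreading-out argument therefore extends $\tilde{\eta}$ to a morphism $\rho: U \to V$ for some affine open neighborhood $U$ of $\mathfrak{p}$ in $\mathbb{A}^m_k$.

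Composing the closed immersion $\spec \bar{A} \hookrightarrow \spec A$ with $\tilde{\eta}$ gives a $k(X)$-point $\sigma: \spec k(X) \to U$ satisfying $\rho \circ \sigma = \eta$; in other words, $\sigma$ is a rational section of $\rho$, defined at the generic point of $V$. A second spreading-out upgrades $\sigma$ to an actual section $s: V' \to U$ over some non-empty open $V' \subseteq V$. Replacing $U$ by $U' := \rho^{-1}(V')$, still open in $\mathbb{A}^m_k$, the morphisms $r := \rho|_{U'} : U' \to V'$ and $s: V' \to U'$ satisfy $r \circ s = \mathrm{id}_{V'}$ and thus exhibit the retract rationality of $X$ over $k$. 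The only mildly technical ingredient, and the place where any difficulty would reside, is the double use of the spreading-out principle for morphisms of finite presentation; this is classical (EGA IV, \S 8), so no genuine obstacle arises beyond checking that both extensions can be arranged to land where needed.
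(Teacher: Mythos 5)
Your argument is correct and is precisely the one the paper alludes to: the lemma is stated with no written proof, only a pointer to Merkurjev's Prop.\ 3.1 (resp.\ Colliot-Th\'el\`ene--Sansuc), whose converse direction proceeds exactly as you do --- lift the generic point of $X$ over the localized polynomial ring, spread the lift out to a morphism $U \to V$ from an open of $\mathbb{A}^m_k$, and spread out the induced generic section to obtain $r \circ s = \mathrm{id}$ on a smaller open. The only detail worth making explicit is that after the second spreading-out one must shrink $V'$ so that $\rho\circ s$ agrees with the inclusion everywhere (two morphisms from an integral scheme to a separated scheme coinciding at the generic point coincide on a dense open), which is implicit in your phrase ``actual section.''
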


For a $k$-variety, the links between the different rationality properties known for varieties are the following (\cite[Prop. 3.4]{Merkurjev_retract}) : \[ k \text{-rational}  \Rightarrow k \text{-stably rational} \Rightarrow k \text{-retract rational} \Rightarrow k \text{-unirational} .  \]

An instance for which rationality can be replaced by retract rationality is the following proposition. This is a straighforward generalization of \cite[Lem. 6.6]{Sansuc_Brauer}.

\begin{prop}
\label{prop additivite picard}
Let $X$ and $Y$ be two (non-empty) smooth $k$-varieties. Assume that $Y$ is retract rational over a separable closure of $k$ and that $Y(k) \neq \emptyset$. Then the projections $X \times_k Y \to X, \, Y$ induce a group isomorphism \[ \pic(X) \oplus \pic(Y) \overset{\sim}{\longrightarrow} \pic (X \times_k Y) . \]
\end{prop}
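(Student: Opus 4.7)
The plan is to follow the classical proof of Sansuc (\cite[Lem.~6.6]{Sansuc_Brauer}), substituting the retract rationality of $Y_{k_s}$ for rationality throughout. The $k$-point $y_0 \in Y(k)$ defines a section $\sigma : X \to X \times_k Y$, $x \mapsto (x, y_0)$, of the first projection $p_X$. Pullback by $\sigma$ splits $p_X^*$, so $\pic(X \times_k Y) = p_X^* \pic(X) \oplus \ker \sigma^*$; since $p_Y \circ \sigma$ is the constant map to $y_0$, the image of $p_Y^* : \pic(Y) \to \pic(X \times_k Y)$ lies in $\ker \sigma^*$. The proposition is thus reduced to showing that $p_Y^* : \pic(Y) \overset{\sim}{\to} \ker \sigma^*$ is an isomorphism.

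The core of the argument is to establish the analogous isomorphism after base change to $k_s$. Retract rationality gives open subschemes $V \subseteq Y_{k_s}$ and $U \subseteq \mathbb{A}^n_{k_s}$ with morphisms $r : U \to V$ and $s : V \to U$ such that $r \circ s = \mathrm{id}_V$. Homotopy invariance of Picard for the smooth scheme $X_{k_s}$ together with the triviality of Picard on opens of $\mathbb A^n_{k_s}$ yields $\pic(X_{k_s} \times_{k_s} U) = \pic(X_{k_s})$. The retract structure then forces both composites in the chain $\pic(X_{k_s}) \xrightarrow{p_X^*} \pic(X_{k_s} \times_{k_s} V) \xrightarrow{(\mathrm{id} \times r)^*} \pic(X_{k_s} \times_{k_s} U) = \pic(X_{k_s})$ to be the identity, giving $\pic(X_{k_s} \times_{k_s} V) = \pic(X_{k_s})$. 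Feeding this into the excision sequence for $X_{k_s} \times_{k_s} V \hookrightarrow X_{k_s} \times_{k_s} Y_{k_s}$ identifies the kernel of the restriction $\pic(X_{k_s} \times_{k_s} Y_{k_s}) \twoheadrightarrow \pic(X_{k_s})$ with $p_Y^* \pic(Y_{k_s})$ (the boundary classes $[D]$ for $D$ irreducible in $Y_{k_s} \setminus V$ already surject onto $\pic(Y_{k_s})$ since $\pic(V) = 0$). Splitting by $p_X^*$ then yields
\begin{equation*}
\pic(X_{k_s} \times_{k_s} Y_{k_s}) = p_X^* \pic(X_{k_s}) \oplus p_Y^* \pic(Y_{k_s}),
\end{equation*}
and in particular $\ker \sigma_{k_s}^* = p_Y^* \pic(Y_{k_s})$.

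For the descent to $k$, one compares the Hochschild--Serre spectral sequences of $X$, $Y$, and $X \times_k Y$. The unit-group input is controlled by Rosenlicht's identity $k_s[X \times_k Y]^\times / k_s^\times = k_s[X]^\times / k_s^\times \oplus k_s[Y]^\times / k_s^\times$, and retract rationality ensures that $k_s[Y]^\times / k_s^\times$ is a finitely generated free $\mathrm{Gal}(k_s/k)$-module. Using the Galois equivariance of the $k_s$-decomposition above together with the rational point $y_0 \in Y(k)$ (which cleanly splits off the $k_s^\times$-part), a five-lemma argument applied to the Hochschild--Serre sequences transfers the isomorphism over $k_s$ to the desired isomorphism $\pic(X) \oplus \pic(Y) \overset{\sim}{\longrightarrow} \pic(X \times_k Y)$ over $k$.

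The main obstacle will be making the final Galois-cohomological descent fully rigorous: one must verify that the Hochschild--Serre obstructions for $M_{k_s}$ (the unique preimage of a class $L \in \ker \sigma^*$ under $p_Y^*$ over $k_s$) vanish already at the level of $H^2(\mathrm{Gal}(k_s/k), k_s[Y]^\times)$, not merely after pushforward to $H^2(\mathrm{Gal}(k_s/k), k_s[X \times_k Y]^\times)$. It is precisely here that retract rationality of $Y_{k_s}$, rather than mere unirationality, is essential, as it ensures the finite-rank, torsion-free structure on $k_s[Y]^\times / k_s^\times$ that makes the relevant cohomology tractable.
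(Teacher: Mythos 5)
Your argument is correct and follows essentially the same route as the paper: over $k_s$ one combines the localization (excision) sequence for $\pic$, homotopy invariance for $\pic(X\times\mathbb{A}^n)$, and the retraction $r\circ s=\mathrm{id}_V$ to get the direct-sum decomposition, and the descent to $k$ is exactly the Hochschild--Serre/Rosenlicht five-lemma argument of \cite[Lem.~6.6]{Sansuc_Brauer}, which is precisely what the paper invokes for that step. One small correction to your closing remark: the fact that $k_s[Y]^\times/k_s^\times$ is finitely generated and free is Rosenlicht's theorem and holds for every smooth geometrically integral $k$-variety, so retract rationality is not what secures it --- it is needed only for the geometric computation $\pic(X_{k_s}\times_{k_s}Y_{k_s})=p_X^*\pic(X_{k_s})\oplus p_Y^*\pic(Y_{k_s})$.
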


\begin{proof} Denote by $\phi_{X,Y}$ the homomorphism from the statement of the proposition.

$\diamond$ Assume first $k$ separably closed. We follow the proof of \cite[Lem. 11]{ColliotSansuc_Requivalence} and just add a few lines. The smooth varieties $X$ and $Y$ have rational points, so it is straighforward to see that $\phi_{X,Y}$ is one-to-one.

Then, let $V$ be a non-empty open subscheme of $Y$ and let $V_0$ be a non-empty open subscheme of an affine space $\mathbb{A}^n _k$ such that there exist $r : V_0 \to V$ and $s : V \to V_0$ satisfying $r \circ s = \mathrm{Id}_V$. Let $U$ be a non-empty affine open subset of $X$. The following diagram is commutative and has exact rows : \[ \xymatrix{
\mathrm{Div}_{X \setminus U} (X) \ar[r] \ar[d] & \mathrm{Pic}(X) \oplus \mathrm{Pic}(Y) \ar[r] \ar[d]^{\phi_{X,Y}} & \mathrm{Pic}(U) \oplus \mathrm{Pic}(Y) \ar[r] \ar[d]^{\phi_{U,Y}} & 0 \\
\mathrm{Div}_{X \times_k Y \setminus U \times_k Y} (X \times_k Y) \ar[r] & \mathrm{Pic}(X \times_k Y) \ar[r] & \mathrm{Pic}(U \times_k Y) \ar[r] & 0
} . \] The left vertical arrow is an isomorphism, so the following equivalence holds : $\phi_{X,Y}$ is onto if, and only if, $\phi_{U,Y}$ is onto. The same argument with $(Y,U)$ instead of $(X,Y)$ shows that $\phi_{X,Y}$ is onto if, and only if, $\phi_{U,V}$ is onto. And with $(U,\mathbb{A}_k^n)$ instead of $(X,Y)$, we see that $\phi_{U,\mathbb{A}^n_k}$ is onto if, and only if, $\phi_{U,V_0}$ is onto. But $\phi_{U,\mathbb{A}^n _k}$ is onto by \cite[Th. 3.11]{Weibel_Kbook}. Thus, from the commutative diagram \[ \xymatrix{
\mathrm{Pic}(U) \oplus \mathrm{Pic}(V) \ar[r]^-{\phi_{U,V}} \ar[d]^{  \mathrm{Id}_U^\ast \oplus r^\ast} & \mathrm{Pic}(U \times_k V) \ar[d]^{( \mathrm{Id}_U \times r)^*} \ar@/^5pc/[dd]^{\mathrm{Id}} \\
\mathrm{Pic}(U) \oplus \mathrm{Pic}(V_0) \ar[r]^-{\phi_{U,V_0}} \ar[d]^{  \mathrm{Id}_U^\ast \oplus s^\ast} & \mathrm{Pic}(U \times_k V_0) \ar[d]^{( \mathrm{Id}_U \times s)^\ast} \\
\mathrm{Pic}(U) \oplus \mathrm{Pic}(V) \ar[r]^-{\phi_{U,V}}  &  \mathrm{Pic}(U \times_k V)
} \] we extract \[ \xymatrix{
\mathrm{Pic}(U) \oplus \mathrm{Pic}(V_0) \ar@{->>}[r] \ar[d] & \mathrm{Pic}(U \times_k V_0) \ar@{->>}[d] \\
\mathrm{Pic}(U) \oplus \mathrm{Pic}(V) \ar[r]  &  \mathrm{Pic}(U \times_k V)
} . \] Thus $\phi_{U,V}$ is onto, and so is $\phi_{X,Y}$.

$\diamond$ When $k$ is not assumed separably closed, we proceed just as in the proof of \cite[Lem. 6.6]{Sansuc_Brauer}, replacing the reference to \cite[Lem. 11]{ColliotSansuc_Requivalence} by the previous point.
\end{proof}

\paragraph{The case of algebraic groups.} For an algebraic group, being retract rational can be checked for the whole space and not just for an open subset. This is a convenient fact for proofs. 

\begin{lemma}
\label{lemme retract homogene}
Let $G$ be a smooth connected algebraic $k$-group and let $A$ be a local $k$-algebra. Assume that the set of $k$-points $G(k)$ is dense in $G$ (this is the case when $G$ is $k$-unirational and $k$ is infinite). If there exists a dense open subscheme $V$ of $G$ such that $V(A) \to V(\bar{A})$ is onto, then $G(A) \to G(\bar{A})$ is also onto.
\end{lemma}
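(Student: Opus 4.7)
The plan is to exploit $k$-rational translations, which lie in $G(A)$, to reduce the lift problem for an arbitrary $\bar A$-point of $G$ to one for a point of $V$, where the hypothesis applies. Concretely, given $g \in G(\bar A)$, I would first produce $h \in G(k)$ such that $hg \in V(\bar A)$; the hypothesis then yields $\tilde w \in V(A) \subseteq G(A)$ lifting $hg$, and $h^{-1}\tilde w \in G(A)$ lifts $g$ because $h^{-1} \in G(k) \subseteq G(A)$. Thus the whole proof reduces to producing such an $h$.

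The condition $hg \in V(\bar A)$ means that $h$, viewed as a closed point of $G_{\bar A}$ via the canonical inclusion $G(k) \hookrightarrow G_{\bar A}(\bar A)$, lies in the open subscheme $F := \mu_g^{-1}(V_{\bar A}) \subseteq G_{\bar A}$, where $\mu_g$ is the translation isomorphism $x \mapsto xg$ of $G_{\bar A}$. Since $V$ is dense in $G$, $F$ is a non-empty (indeed dense) open of $G_{\bar A}$. Hence it suffices to show that every non-empty open of $G_{\bar A}$ meets the image of $G(k)$, i.e.\ that $G(k)$ is Zariski-dense in $G_{\bar A}$.

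Transferring the density hypothesis from $G$ to $G_{\bar A}$ is the only point that takes any care, and I expect it to be the main (minor) obstacle. It follows from a routine calculation: on any affine open $U \subseteq G$, a regular function on $U_{\bar A}$ can be written $f = \sum_i f_i \otimes a_i$ with $f_i \in \mathcal O(U)$ and $(a_i)$ a $k$-linearly independent family in $\bar A$. Evaluating at $h \in U(k)$ gives $\sum_i f_i(h)\, a_i \in \bar A$, which vanishes if and only if each $f_i(h) = 0$. So if $f$ vanishes on every $h \in U(k)$, each $f_i$ vanishes on the (dense) set $U(k)$, hence $f_i = 0$ and therefore $f = 0$. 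Varying $U$ gives the desired Zariski-density of $G(k)$ in $G_{\bar A}$, which combined with the reduction above completes the proof.
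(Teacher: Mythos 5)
Your proposal is correct and is essentially the paper's own homogeneity argument: translate a given $\bar A$-point into $V$ by an element of $G(k)\subseteq G(A)$, lift via the hypothesis on $V$, and translate back. The only difference is cosmetic (you argue pointwise rather than via the covering $G=\bigcup_{h\in G(k)}h^{-1}V$), and you additionally spell out the transfer of density of $G(k)$ to $G_{\bar A}$, which the paper leaves implicit.
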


\begin{proof}
This is just a homogeneity argument. First of all, $\bigcup_{g \in G(k)} \left( g V(A) \right) \to \bigcup_{g \in G(k)} \left( g V(\bar{A}) \right)$ is onto. But since $G(k)$ is dense in $G$, the union of all of the $gV$ for $g$ lying in $G(k)$ is equal to $G$. Thus, $G(\bar{A})= \bigcup_{g \in G(k)} \left( g V(\bar{A}) \right)$ because $\bar{A}$ is a field, so $\bigcup_{g \in G(k)} \left( g V(A) \right) \to G(\bar{A})$ is onto, but since $\bigcup_{g \in G(k)} \left( g V(A) \right) \subseteq G(A)$ we are done.
\end{proof}

\begin{csq}
\label{retract homogene}
If $k$ is infinite and $G$ is $k$-retract rational, then for every local $k$-algebra $A$, the map $G(A) \to G(\bar{A})$ is onto.
\end{csq}

			\subsection{Examples of retract rational pseudo-reductive groups}

Pseudo-reductive groups are not always retract rational (\cite[Ex. 11.3.1]{CGP}). However, all smooth and connected perfect linear algebraic $k$-groups are $k$-unirational (\cite[Prop. A.2.11]{CGP}) ; this applies in particular to pseudo-semisimple $k$-groups. 

In order to prove Theorem \ref{prop groupe retract rationnel}, the two following lemmas will be needed. By \textit{fppf} we mean the "finitely presented and faithfully flat" Grothendieck topology on a scheme. Flat cohomology groups will be denoted by $\h^d_{\textit{fl}}(-,-)$.

\begin{lemma}[{\cite[Exp. XXIV, Prop. 8.2]{SGA3}}]
\label{lemme pss rr}
Let $S^\prime \to S$ be a scheme morphism. Then for every \textit{fppf} group sheaf $\mathcal{G}^\prime$ on $S^\prime$, the map $\mathrm{H}^1_{\textit{fl}}(S,\mathrm{R}_{S^\prime/S}(\mathcal{G^\prime})) \to \mathrm{H}^1_{\textit{fl}}(S^\prime,\mathcal{G}^\prime)$ is one-to-one, and its image is the set formed by the classes of $\mathcal{G}^\prime$-torsors that become trivial over an \textit{fppf} cover $R \times_S S^\prime$ of $S^\prime$ induced by an \textit{fppf} cover $R$ of $S$.
\end{lemma}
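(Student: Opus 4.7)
The plan is to realize both sides of the claimed injection as filtered colimits of Čech cohomology pointed sets over fppf covers, and then observe that Weil restriction converts the colimit system on $S$ into the colimit system on $S'$ indexed by \emph{induced} covers.

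Write $f \colon S' \to S$ for the structure morphism, so that $\mathrm{R}_{S'/S}(\mathcal{G}') = f_*\mathcal{G}'$ as fppf sheaves on $S$. The natural map under consideration sends an $f_*\mathcal{G}'$-torsor $P$ on $S$ to the $\mathcal{G}'$-torsor on $S'$ obtained from $f^*P$ by contracted product against the adjunction counit $f^*f_*\mathcal{G}' \to \mathcal{G}'$. The key identity, which is just the definition of $f_*$, is that for any fppf cover $R \to S$ and any integer $n \geq 0$ one has
$$(f_*\mathcal{G}')\bigl(R^{\times_S (n+1)}\bigr) \;=\; \mathcal{G}'\bigl((R \times_S S')^{\times_{S'} (n+1)}\bigr),$$
compatibly with the face and degeneracy maps. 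Hence the Čech complex of $f_*\mathcal{G}'$ on $R \to S$ coincides on the nose with the Čech complex of $\mathcal{G}'$ on the induced cover $R \times_S S' \to S'$, and this yields a bijection $\check{\mathrm{H}}^1(R/S, f_*\mathcal{G}') \overset{\sim}{\to} \check{\mathrm{H}}^1(R \times_S S'/S', \mathcal{G}')$.

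Passing to the filtered colimit over all fppf covers $R$ of $S$ then gives
$$\mathrm{H}^1_{\textit{fl}}(S, f_*\mathcal{G}') \;=\; \varinjlim_R \check{\mathrm{H}}^1(R/S, f_*\mathcal{G}') \;\overset{\sim}{\longrightarrow}\; \varinjlim_R \check{\mathrm{H}}^1(R \times_S S'/S', \mathcal{G}') \;\hookrightarrow\; \mathrm{H}^1_{\textit{fl}}(S', \mathcal{G}'),$$
where the last arrow is the inclusion into the full colimit ranging over \emph{all} fppf covers of $S'$. Injectivity is immediate from this description, and the image is, by construction, exactly the set of classes in $\mathrm{H}^1_{\textit{fl}}(S', \mathcal{G}')$ trivialized by some fppf cover of the form $R \times_S S'$ with $R \to S$ fppf. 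The only genuine obstacle is to verify that the Čech-theoretic bijection thus constructed agrees with the geometrically defined map induced by the adjunction counit $f^*f_*\mathcal{G}' \to \mathcal{G}'$ on torsors; this is formal but requires a careful unwinding of descent data.
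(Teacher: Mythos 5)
Your argument is correct, but be aware that the paper does not prove this lemma at all: it is quoted verbatim from SGA3, Exp.~XXIV, Prop.~8.2, where it is deduced from the non-abelian Leray formalism for $f\colon S'\to S$ (the edge map $\mathrm{H}^1(S,f_*\mathcal{G}')\to\mathrm{H}^1(S',\mathcal{G}')$ of the exact sequence of low-degree terms is injective, with image the classes that die after localization on the base $S$). Your route is more elementary and self-contained, and it does work: the identity $R^{\times_S(n+1)}\times_S S'=(R\times_S S')^{\times_{S'}(n+1)}$ identifies the two \v{C}ech complexes on the nose; the colimit over covers of $S$ computes $\mathrm{H}^1_{\textit{fl}}(S,f_*\mathcal{G}')$ because a torsor under a group sheaf is by definition trivialized by some cover; and injectivity into $\mathrm{H}^1_{\textit{fl}}(S',\mathcal{G}')$ follows because each $\check{\mathrm{H}}^1(\mathcal{U},\mathcal{G}')\to\mathrm{H}^1_{\textit{fl}}(S',\mathcal{G}')$ is injective and the index system is filtered via the common refinements $R_1\times_S R_2$. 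The step you leave as ``formal'' genuinely is: the map of the lemma is the composite $\mathrm{H}^1(S,f_*\mathcal{G}')\to\mathrm{H}^1(S',f^*f_*\mathcal{G}')\to\mathrm{H}^1(S',\mathcal{G}')$, and on a cocycle $g\in(f_*\mathcal{G}')(R\times_S R)=\mathcal{G}'\bigl((R\times_S S')\times_{S'}(R\times_S S')\bigr)$ the counit returns exactly the section named by the adjunction identification, so the two descriptions coincide literally at the cocycle level. What your approach buys is independence from Giraud's machinery and a transparent description of the image; what it costs is that it is confined to degree one --- which is all the paper uses.
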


\begin{lemma}
\label{rappel pss rr}
Let $A$ be a semi-local ring. Then every finite $A$-algebra is semi-local. In particular its Picard group is trivial.
\end{lemma}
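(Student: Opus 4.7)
The plan is to handle the two assertions in turn. Let $\mathfrak{m}_1,\ldots,\mathfrak{m}_n$ denote the maximal ideals of $A$, write $J := \bigcap_i \mathfrak{m}_i$ for the Jacobson radical, and recall that $A/J \simeq \prod_i A/\mathfrak{m}_i$ is a finite product of fields by the Chinese remainder theorem.

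To see that a finite $A$-algebra $B$ is semi-local, I would use that $B$ is integral over $A$: by lying-over, every maximal ideal of $B$ contracts to some $\mathfrak{m}_i$ and therefore contains $JB$. Hence the maximal ideals of $B$ are in bijection with those of the quotient $B/JB$. But $B/JB$ is finitely generated as a module over the Artinian ring $A/J$, so it is itself Artinian, and in particular has only finitely many maximal ideals.

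For the Picard group, the key input is the standard commutative-algebra fact that every finitely generated projective module of constant rank over a semi-local ring is free. Given an invertible $A$-module $L$, its reduction $L/JL$ is an invertible module over the product of fields $A/J$, hence free of rank one. Lifting a generator of $L/JL$ to some $\ell \in L$ and applying Nakayama's lemma yields a surjection $A \twoheadrightarrow L$ sending $1$ to $\ell$; projectivity of $L$ splits this surjection, and since $L$ has rank one a point-by-point rank count forces the kernel to vanish. Thus $L \simeq A$ and $\pic(A) = 0$. Applied to a finite $A$-algebra $B$ (itself semi-local by the first part), this gives $\pic(B) = 0$.

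There is no serious obstacle here; the only step demanding even minor care is the passage from the semi-locality of $A$ to the Artinianness of $A/J$ and hence of $B/JB$, both of which follow immediately from the structure of $A/J$ as a finite product of fields.
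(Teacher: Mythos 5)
Your proof is correct and follows essentially the same route as the paper: both rest on the fact that, for an integral map, maximal ideals of $B$ contract to maximal ideals of $A$ (so $B$ has only finitely many of them), and on the freeness of finitely generated rank-one projectives over semi-local rings. The only differences are cosmetic --- you treat all fibres at once by passing to the Artinian quotient $B/JB$ over $A/J$ where the paper counts the finite fibre over each maximal ideal of $A$ separately, and you write out the Nakayama argument for the freeness statement that the paper simply cites from Bourbaki.
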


We will use this lemma in the following context : Let $l/k$ be a finite ring extension and let $A$ be a local $k$-algebra. Then the Picard group of $A \otimes_k l$ is trivial.

\begin{proof}
Assume $A$ is a semi-local ring and let $B$ be a finite $A$-algebra. Replacing $A$ by its quotient with respect to the kernel of $A \to B$, we may assume that $A$ is a subring of $B$. Thus $A \to B$ is an injective and integral ring extension, so the Going-up Theorem implies that \emph{every} maximal ideal of $B$ lies over a (unique) maximal ideal of $A$. Furthermore, since $A \hookrightarrow B$ is a finite ring extension, there are only finitely many maximal ideals of $B$ lying over a given maximal ideal of $A$. The fact that $A$ has only finitely many maximal ideals implies that this is also the case for $B$.

Regarding the Picard group, we just need to recall that projective modules of finite type and of constant rank over any semi-local ring are free by \cite[II.\S 5.3. Prop. 5]{Bourbaki_AC}.
\end{proof}

Here is the main result of this section.

\begin{thm}
\label{prop groupe retract rationnel}
Let $k^\prime/k$ be a finite, purely inseparable field extension. Let $G^\prime$ be a connected split reductive $k^\prime$-group; let $T^\prime$ be a central $k^\prime$-subgroup of $G^\prime$. Then $\mathrm{R}_{k'/k}(G')/\mathrm{R}_{k'/k}(T^\prime)$ is retract rational over $k$.

Generally, if $G^\prime$ is not assumed split, then $\mathrm{R}_{k'/k}(G')/\mathrm{R}_{k'/k}(T^\prime)$ is retract rational over a separable closure of $k$.
\end{thm}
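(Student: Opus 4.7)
My plan is to reduce the problem to a commutative case via the Bruhat open cell of $G'$, then prove retract rationality of the reduced commutative group via a diagram chase in fppf cohomology; the non-split case will follow by base-change to $k_s$.

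For the big-cell reduction, since $G'$ is split reductive over $k'$ and $T' \subseteq Z(G')$ lies inside a split maximal $k'$-torus $T$, one has the Bruhat open cell $\Omega' = U'^- \cdot T \cdot U'^+ \subseteq G'$ with $U'^{\pm}$ the unipotent radicals of opposite Borels; as a $k'$-scheme, $\Omega' \simeq U'^- \times T \times U'^+$. Because $T'$ is central, left multiplication by $T'$ acts trivially on the $U'^{\pm}$-factors and by translation on the $T$-factor. Consequently $\mathrm{R}_{k'/k}(\Omega')$ is $\mathrm{R}_{k'/k}(T')$-stable, and its quotient $\mathrm{R}_{k'/k}(\Omega')/\mathrm{R}_{k'/k}(T')$ is an open dense subscheme of $G$ isomorphic to $\mathbb{A}^N \times H$, where $H := \mathrm{R}_{k'/k}(T)/\mathrm{R}_{k'/k}(T')$. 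Since retract rationality is a birational invariant stable under products with affine spaces, it suffices to show that $H$ is retract rational over $k$.

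To do so, take $A = k[X_1, \ldots, X_m]_\mathfrak{p}$ with $\bar A \simeq k(H)$ and invoke the criterion of Proposition \ref{prop retract merkurjev}: the task is to show that the generic point of $H$ belongs to the image of $H(A) \to H(\bar A)$. Consider the fppf long exact sequence for $1 \to \mathrm{R}_{k'/k}(T') \to \mathrm{R}_{k'/k}(T) \to H \to 1$ over both $A$ and $\bar A$. Since $k'/k$ is purely inseparable, both $A \otimes_k k'$ and $\bar A \otimes_k k'$ are local rings, so Lemma \ref{lemme pss rr} and Lemma \ref{rappel pss rr} yield $\mathrm{H}^1_{\textit{fl}}(A, \mathrm{R}_{k'/k}(T)) = 0 = \mathrm{H}^1_{\textit{fl}}(\bar A, \mathrm{R}_{k'/k}(T))$ (as $T$ is a split torus, these inject into Picard groups of local rings). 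Hence the connecting maps from $H(A)$ and $H(\bar A)$ to the corresponding $\mathrm{H}^1_{\textit{fl}}(-, \mathrm{R}_{k'/k}(T'))$ are surjective. A diagram chase reduces the lifting of $x \in H(\bar A)$ to: (a) lifting the obstruction $\bar\partial(x)$ to $\mathrm{H}^1_{\textit{fl}}(A, \mathrm{R}_{k'/k}(T'))$, and (b) lifting an element of $\mathrm{R}_{k'/k}(T)(\bar A)$ to $\mathrm{R}_{k'/k}(T)(A)$. Task (b) follows from Consequence \ref{retract homogene}, since $\mathrm{R}_{k'/k}(T)$ is rational (open in an affine space). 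For (a), decompose $T'$ as a product of $\mathbb{G}_m$-factors and $\mu_{n_i}$-factors: the torus parts give vanishing $\mathrm{H}^1$ by the same Picard argument, and for each $\mu_n$-factor the Kummer sequence applied to $\mathrm{R}_{k'/k}(\mathbb{G}_m)$ combined with the Picard vanishing identifies $\mathrm{H}^1_{\textit{fl}}(-, \mathrm{R}_{k'/k}(\mu_n))$ as a quotient of $(- \otimes_k k')^{\times}$; the required lifting then reduces to surjectivity of $(A \otimes_k k')^{\times} \twoheadrightarrow (\bar A \otimes_k k')^{\times}$, which holds since $A \otimes_k k'$ is local.

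For the non-split case, base-change to $k_s$: the ring $k' \otimes_k k_s$ is a field (because $k'/k$ is purely inseparable and $k_s/k$ is separable), finite purely inseparable over $k_s$, and coincides with the separable closure of $k'$; over it $G'$ becomes split. Since Weil restriction commutes with flat base change, $G_{k_s}$ fits the split case over $k_s$ just proved. The main obstacle is task (a) above: one must verify that the unit-lift yields a class in the image of the Lemma \ref{lemme pss rr} embedding, that is, a $\mu_n$-torsor over $A \otimes_k k'$ trivializing on an $A$-induced fppf cover. This is handled by simultaneously lifting the trivializing cover of the $\bar A$-class---taken polynomial, hence liftable using the UFD property of $A$.
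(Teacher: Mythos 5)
Your big-cell reduction is sound and is in fact a genuine simplification of the paper's route: the paper applies the surjective five lemma directly to the fppf cohomology sequence of $1 \to \mathrm{R}_{k'/k}(T') \to \mathrm{R}_{k'/k}(G') \to G \to 1$, and so must also deal with surjectivity of $\mathrm{R}_{k'/k}(G')(A) \to \mathrm{R}_{k'/k}(G')(\bar A)$ and with the map into $\mathrm{H}^1_{\textit{fl}}(\cdot,\mathrm{R}_{k'/k}(G'))$; your passage to $H=\mathrm{R}_{k'/k}(T)/\mathrm{R}_{k'/k}(T')$ makes those steps unnecessary. But in both approaches the entire weight of the proof rests on the surjectivity of $\mathrm{H}^1_{\textit{fl}}(A,\mathrm{R}_{k'/k}(T')) \to \mathrm{H}^1_{\textit{fl}}(\bar A,\mathrm{R}_{k'/k}(T'))$, and that is exactly where your argument has a real gap.

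For a factor $\mu_n$ with $p\mid n$, say $n=p^r$, the ``Kummer sequence applied to $\mathrm{R}_{k'/k}(\mathbb{G}_m)$'' is not exact: the $p^r$-th power map $\mathrm{R}_{k'/k}(\mathbb{G}_m)\to\mathrm{R}_{k'/k}(\mathbb{G}_m)$ is not an fppf surjection, because Weil restriction along an inseparable extension does not preserve surjectivity of non-smooth homomorphisms. Concretely, if $k'=k(a^{1/p})$ with $a\notin k^p$, then for every $k$-algebra $R$ one has $\bigl(\sum_i r_i\otimes\lambda_i\bigr)^p=\sum_i r_i^p\otimes\lambda_i^p\in R\otimes_k k'^p\subseteq R\otimes 1$, so $1\otimes a^{1/p}$ is not a $p$-th power in $(R\otimes_k k')^\times$ for any $R$ whatsoever. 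Consequently $\mathrm{H}^1_{\textit{fl}}(B,\mathrm{R}_{k'/k}(\mu_{p^r}))$ is \emph{not} a quotient of $(B\otimes_k k')^\times$; via Lemma \ref{lemme pss rr} it is only a subgroup of the Kummer quotient $(B\otimes_k k')^\times/\bigl((B\otimes_k k')^\times\bigr)^{p^r}$, cut out by the condition that the torsor trivialize on a cover induced from $B$. Your closing remark about ``simultaneously lifting the trivializing cover, taken polynomial, hence liftable using the UFD property of $A$'' correctly identifies this descent condition but does not resolve it. The paper's fix is Lemma \ref{suite}: the $p^r$-th power map \emph{is} surjective onto the smaller group $\mathrm{R}_{k(k'^{p^r})/k}(\mathbb{G}_m)$, giving the exact sequence \eqref{suite exacte retract}; with that target, $\mathrm{H}^1_{\textit{fl}}(B,\mathrm{R}_{k'/k}(\mu_{p^r}))$ genuinely is a quotient of $(B\otimes_k k(k'^{p^r}))^\times$, and your unit-lifting argument for the local ring $A\otimes_k k(k'^{p^r})$ then closes the proof. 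Everything else in your proposal (the $\mathbb{G}_m$ and prime-to-$p$ factors, the diagram chase, and the descent to $k_s$ in the non-split case) is correct.
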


Groups of the form $\mathrm{R}_{k'/k}(G')/\mathrm{R}_{k'/k}(T^\prime)$ are pseudo-reductive by \cite[1.3.4]{CGP}. If $G^\prime$ is semisimple simply connected, then these groups are pseudo-semisimple by \textit{loc. cit.}.

\begin{proof}
Write $G$ for the algebraic $k$-group $\mathrm{R}_{k'/k}(G')/\mathrm{R}_{k'/k}(T^\prime)$ and fix a local $k$-algebra $A$. First we notice that $k$ may be assumed to be infinite because a finite field is perfect, so in the finite case, $k^\prime=k$ and the group $\mathrm{R}_{k'/k}(G')/\mathrm{R}_{k'/k}(T^\prime)$ is just a split reductive group, hence a rational variety (the assumption "$k$ infinite" is useful to invoke Consequence \ref{retract homogene} at some points of the proof). We want to show that $G(A) \to G(\bar{A})$ is onto for every local $k$-algebra $A$. By Proposition \ref{prop retract merkurjev} this will imply that $G$ is retract rational over $k$.

Consider the exact sequence of \textit{fppf} sheaves over $\spec(A)$ and $\spec(\bar{A})$ \[ 1 \to \mathrm{R}_{k'/k} (T^\prime) \to \mathrm{R}_{k'/k}(G') \to G \to 1 . \] Taking \textit{fppf} cohomology yields the following commutative diagram with exact rows \[ \xymatrix{
 \mathrm{R}_{k'/k} (T^\prime)(A) \ar[d]^{\phi_1} \ar[r] & \mathrm{R}_{k'/k}(G')(A) \ar[d]^{\phi_2} \ar[r] & G(A) \ar[d]^{\phi_3} \ar[r] & \mathrm{H}^1_{\textit{fl}}(A,\mathrm{R}_{k'/k} (T^\prime)) \ar[d]^{\phi_4} \ar[r]  & \mathrm{H}^1_{\textit{fl}}(A,\mathrm{R}_{k'/k} (G')) \ar[d]^{\phi_5 '}   \\
\mathrm{R}_{k'/k} (T^\prime)(\bar{A}) \ar[r] & \mathrm{R}_{k'/k}(G')(\bar{A}) \ar[r] & G(\bar{A}) \ar[r] & \mathrm{H}^1_{\textit{fl}}(\bar{A},\mathrm{R}_{k'/k} (T^\prime)) \ar[r]  &   \mathrm{H}^1_{\textit{fl}}(\bar{A},\mathrm{R}_{k'/k} (G'))   \\
} . \] The surjective part of the Five Lemma say that $G(A) \to G(\bar{A})$ is onto if $\phi_2$ and $\phi_4$ are onto and that $\phi_5^\prime$ is one-to-one. Let's prove this.

Actually, we don't need to show that $\phi_5^\prime$ is one-to-one. Indeed, $G^\prime$ is split, so there is a split torus $T_0 \subseteq G^\prime$ such that $T^\prime$ sits in $T_0$. Then, writing $B$ for either $A$ or $\bar{A}$, $\mathrm{H}^1_{\textit{fl}}(B,\mathrm{R}_{k'/k} (T^\prime)) \to \mathrm{H}^1_{\textit{fl}}(B,\mathrm{R}_{k'/k} (G'))$ factorises through $\mathrm{H}^1_{\textit{fl}}(B, \mathrm{R}_{k'/k}(T_0))$. But the latter set can be realized as a subset of $\mathrm{H}^1_{\textit{fl}}(B \otimes_k k', T_0))$ by Lemma \ref{lemme pss rr} and $\mathrm{H}^1_{\textit{fl}}(B \otimes_k k', T_0)$ is a singleton since $T_0/k^\prime$ is a split torus and $B \otimes_k k^\prime$ is a semilocal ring by Lemma \ref{rappel pss rr}. Thus the arrow $\mathrm{H}^1_{\textit{fl}}(B,\mathrm{R}_{k'/k} (T^\prime)) \to \mathrm{H}^1_{\textit{fl}}(B,\mathrm{R}_{k'/k} (G'))$ is the zero map. We are then reduced to apply the Five Lemma to the commutative diagram with exact rows \[ \xymatrix{
 \mathrm{R}_{k'/k} (T^\prime)(A) \ar[d]^{\phi_1} \ar[r] & \mathrm{R}_{k'/k}(G')(A) \ar[d]^{\phi_2} \ar[r] & G(A) \ar[d]^{\phi_3} \ar[r] & \mathrm{H}^1_{\textit{fl}}(A,\mathrm{R}_{k'/k} (T^\prime)) \ar[d]^{\phi_4} \ar[r] & 1 \ar[d]^{\phi_5} \\
\mathrm{R}_{k'/k} (T^\prime)(\bar{A}) \ar[r] & \mathrm{R}_{k'/k}(G')(\bar{A}) \ar[r] & G(\bar{A}) \ar[r] & \mathrm{H}^1_{\textit{fl}}(\bar{A},\mathrm{R}_{k'/k} (T^\prime)) \ar[r] & 1 
} . \]

$\diamond$ First of all, $\phi_5$ is one-to-one.

$\diamond$ Secondly, since $G^\prime$ is a split reductive group, it is rational over $k^\prime$. So its Weil restriction through $k^\prime/k$ is rational over $k$ and is in particular retract rational over $k$. By Consequence \ref{retract homogene} $\phi_2$ must be onto.

$\diamond$ Let's show that $\phi_4$ is onto. 

The $k^\prime$-group $T^\prime$ is diagonalizable. So it is a product of copies of the multiplicative group $\mathbb{G}_{m,k^\prime}$ and of groups of roots of unity $\mu_{l,k'}$ for $l \in \mathbb{N}$. For the $\mathbb{G}_{m,k^\prime}$ factors, $\mathrm{H}^1_{\textit{fl}}(A,\mathrm{R}_{k'/k} (\mathbb{G}_{m,k^\prime}))$ and $\mathrm{H}^1_{\textit{fl}}(\bar{A},\mathrm{R}_{k'/k} (\mathbb{G}_{m,k^\prime}))$ are singletons.

For factors $\mu_{l,k'}$ with $l$ prime to the characteristic exponent $p$ of $k$, the $k^\prime$-group $\mu_{l,k'}$ is \'etale, hence by \cite[Cor. A.5.13]{CGP} the canonical map $\mu_{l,k} \to \res(\mu_{l,k'})$ is an isomorphism, so for all local algebras $B$, \[ \mathrm{H}^1_{\textit{fl}}(B,\mathrm{R}_{k'/k}(\mu_{l,k'}))=\h^1_{\textit{fl}}(B,\mu_{l,k})=\mathbb{G}_m(B)/\mathbb{G}_m (B)^l . \] Since $\mathbb{G}_m$ is $k$-(retract) rational, the map $\mathbb{G}_m(A) \to \mathbb{G}_m(\bar{A})$ is onto. Thus \[ \mathrm{H}^1_{\textit{fl}}(A,\mathrm{R}_{k'/k} (\mu_{l,k'})) \to \mathrm{H}^1_{\textit{fl}}(\bar{A},\mathrm{R}_{k'/k} (\mu_{l,k'})) \]  is onto.

For factors $\mu_{q,k'}$ with $q=p^r$, the following sequence of algebraic $k$-groups is exact by Lemma \ref{suite} below : \begin{equation}
\label{suite exacte retract} 1 \to \mathrm{R}_{k'/k}(\mu_{q,k'}) \to \mathrm{R}_{k'/k}(\mathbb{G}_m) \to \mathrm{R}_{k(k'^{q})/k}(\mathbb{G}_m) \to 1 ,
\end{equation} where the arrow $\mathrm{R}_{k'/k}(\mu_{q,k'}) \to \mathrm{R}_{k'/k}(\mathbb{G}_m)$ is inclusion and the arrow $\mathrm{R}_{k'/k}(\mathbb{G}_m) \to \mathrm{R}_{k(k'^q)/k}(\mathbb{G}_m)$ is elevation to the $q$-th power. Considering the sequence \eqref{suite exacte retract} for $B=A$ and $\bar{A}$ yields a commutative diagram with exact rows \[ \xymatrix{
 (A \otimes_k k(k'^{q}))^\ast \ar[r] \ar[d] & \mathrm{H}^1_{\textit{fl}}(A,\mathrm{R}_{k'/k} (\mu)) \ar[r] \ar[d]^{\phi_4} & \mathrm{H}^1_{\textit{fl}}(A,\mathrm{R}_{k'/k} (\mathbb{G}_m)) \ar[d] \\
(\bar{A} \otimes_k k(k'^q))^\ast \ar[r] & \mathrm{H}^1_{\textit{fl}}(\bar{A},\mathrm{R}_{k'/k} (\mu)) \ar[r] & \mathrm{H}^1_{\textit{fl}}(\bar{A},\mathrm{R}_{k'/k} (\mathbb{G}_m)) \\
 } . \] The groups in the right column are zero by Lemmas \ref{lemme pss rr} and \ref{rappel pss rr}. Moreover, the left vertical arrow is surjective because $\mathrm{R}_{k(k'^{q})/k}(\mathbb{G}_m)$ is $k$-retract rational. Thus we find that $\mathrm{H}^1_{\textit{fl}}(A,\mathrm{R}_{k'/k} (\mu_{q,k'})) \to \mathrm{H}^1_{\textit{fl}}(\bar{A},\mathrm{R}_{k'/k} (\mu_{q,k'}))$ is onto ; that is $\phi_4$ is onto.
\end{proof}

\begin{lemma}
\label{suite}
Let $q=p^r$ be a power of $p$. The sequence of algebraic groups \[ 1 \to \mathrm{R}_{k'/k}(\mu_{q,k'}) \overset{f}{\longrightarrow} \mathrm{R}_{k'/k}(\mathbb{G}_m) \overset{g}{\longrightarrow} \mathrm{R}_{k(k'^{q})/k}(\mathbb{G}_m) \to 1  \] is exact, where $f$ is the obvious homomorphism and $g$ is the homomorphism given by the collection of \[ \left\lbrace \begin{array}{ccc}
(R \otimes_k k^\prime)^\ast & \longrightarrow & (R \otimes_k k(k'^q))^\ast \\
\sum_i a_i \otimes \lambda_i & \longmapsto & \sum_i a_i^q \otimes \lambda_i^q
\end{array} 
\right. \] for all $k$-algebras $R$.
\end{lemma}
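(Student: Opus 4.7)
The plan is to verify exactness at each of the three positions. For injectivity of $f$ and exactness in the middle, I would apply the left-exact functor $\mathrm{R}_{k'/k}$ to the short exact sequence $1 \to \mu_{q,k'} \to \mathbb{G}_{m,k'} \xrightarrow{(\cdot)^q} \mathbb{G}_{m,k'}$, obtaining
\[
1 \to \mathrm{R}_{k'/k}(\mu_{q,k'}) \to \mathrm{R}_{k'/k}(\mathbb{G}_m) \xrightarrow{\alpha} \mathrm{R}_{k'/k}(\mathbb{G}_m),
\]
where $\alpha$ is the $q$-th power map. Since Frobenius commutes with tensor products in characteristic $p$, $(\sum_i a_i \otimes \lambda_i)^q = \sum_i a_i^q \otimes \lambda_i^q$ lies in $R \otimes_k k(k'^q)$ for every $k$-algebra $R$, so $\alpha = \iota \circ g$, where $\iota: \mathrm{R}_{k(k'^q)/k}(\mathbb{G}_m) \hookrightarrow \mathrm{R}_{k'/k}(\mathbb{G}_m)$ is the closed immersion induced by the inclusion $k(k'^q) \hookrightarrow k'$. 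Injectivity of $\iota$ then yields $\ker(g) = \ker(\alpha) = \mathrm{R}_{k'/k}(\mu_{q,k'})$.

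The heart of the argument is fppf-surjectivity of $g$. The key linear-algebra observation is that for any $k$-basis $\theta_1, \ldots, \theta_n$ of $k'$, the family $\theta_1^q, \ldots, \theta_n^q$ spans $k(k'^q)$ as a $k$-vector space: indeed $k(k'^q) = k \cdot k'^q$, and any $\lambda = \sum a_i \theta_i \in k'$ with $a_i \in k$ satisfies $\lambda^q = \sum a_i^q \theta_i^q$, so $k'^q$ and hence its $k$-span lies in $\mathrm{span}_k\{\theta_i^q\}$. Given $y \in (R \otimes_k k(k'^q))^*$ for a $k$-algebra $R$, this identity lets me pick $c_1, \ldots, c_n \in R$ with $\sum_i c_i \otimes \theta_i^q = y$. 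I would then form the finite free fppf cover $R' := R[T_1, \ldots, T_n]/(T_i^q - c_i)_{i=1}^n$ of rank $q^n$ over $R$, and set $x := \sum_i T_i \otimes \theta_i \in R' \otimes_k k'$. A direct Frobenius computation gives $x^q = \sum_i c_i \otimes \theta_i^q = y$, and $y$ being a unit forces $x$ to be a unit (from $x \cdot x^{q-1} = y$). Hence $g(x) = y$ on the fppf cover, establishing fppf-surjectivity.

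The principal obstacle is the linear-algebra identity $k(k'^q) = \mathrm{span}_k\{\theta_i^q\}$: once this ensures that any $y$ in the target admits a compatible lift of coefficients against the $\theta_i^q$, the construction of the fppf $q$-th root cover and the verification that it realises a section of $g$ are routine.
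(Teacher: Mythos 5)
Your proof is correct, and while the key algebraic observation is the same as the paper's (namely that $k(k'^{q})$ is spanned over $k$ by $q$-th powers, so one only has to extract $q$-th roots of the coefficients), the implementation of surjectivity is genuinely different. The paper checks surjectivity of $g$ only on $\bar{k}$-points, writing a unit of $\bar{k}\otimes_k k(k'^{q})$ as $\sum_i a_i\otimes\kappa_i^{q}$ and taking $q$-th roots $b_i$ of the $a_i$ in the algebraically closed field $\bar{k}$; this implicitly relies on the standard fact that a homomorphism of algebraic groups over a field that is surjective on geometric points is a faithfully flat (hence fppf) epimorphism. You instead prove fppf-local surjectivity directly over an arbitrary $k$-algebra $R$ by adjoining $q$-th roots of the coefficients via the finite free cover $R[T_1,\dots,T_n]/(T_i^{q}-c_i)$. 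Your route is more self-contained (it avoids the reduction to geometric points) and yields exactness as a sequence of fppf sheaves over any base in one stroke, which is in fact the form in which the lemma is used later (over $\mathrm{Spec}(A)$ and $\mathrm{Spec}(\bar{A})$ for a local $k$-algebra $A$); the paper's route is shorter but leaves that descent to general facts about quotients of group schemes over a field. Your treatment of the kernel, via left-exactness of $\mathrm{R}_{k'/k}$ and the factorization of the $q$-th power map through the closed immersion $\iota$, is also cleaner than the paper's one-line assertion, and it has the added benefit of confirming that $g$ is well defined.
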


\begin{proof}
Let $R$ be a $k$-algebra. The kernel of $g$ is easily seen to be $\mathrm{R}_{k'/k}(\mu_{q,k'})(R)$.

In order to show that $g$ is onto, we show that it is onto on $\bar{k}$-points where $\bar{k}$ is an algebraic closure of $k$. So let $ \sum_i a_i \otimes_k \lambda_i \in  \mathrm{R}_{k(k'^q)/k}(\mathbb{G}_m)(\bar{k})=(\bar{k} \otimes_k k(k'^q))^\ast$. We can assume that the $\lambda_i$'s are $q$-th powers of elements of $k^\prime$, that is $\lambda_i= \kappa_i^q$ for $\kappa_i \in k^\prime$. Then taking $b_i$ to be the $q$-th rooth of $a_i$ for all $i$, we see that $\left( \sum_i a_i \otimes \lambda_i \right)=\left( \sum_i b_i \otimes \kappa_i \right)^q$.
\end{proof}

\begin{ex}
Let $k^\prime/k$ be a finite and purely inseparable field extension. Let $q \neq 1$ be a positive integer and consider the group $G=\res(\mathrm{SL}_{q,k^\prime})/\res(\mu_{q,k^\prime})$. When $q$ is coprime to $p$, this group is actually isomorphic to $\res(\mathrm{PGL}_{q,k^\prime})$ (\cite[Cor. A.5.4(3)]{CGP}), which is rational over $k$. In general, Theorem \ref{prop groupe retract rationnel} says that $G$ is retract rational over $k$.
\end{ex}

	\section{Extension groups of pseudo-semisimple groups}
		\label{sous section cas des groupes pseudo-semi-simples}

We determine the group of extensions by $\mathbb{G}_m$ for some pseudo-semisimple groups. Thanks to \cite[Th. 5.1.3]{ConradPrasad_classification} we can proceed as in the case of semisimple groups via the notion of universal covers.

Note that since pseudo-semisimple groups are perfect, they are unirational (\cite[Prop. A.2.11]{CGP}). Thus, by \cite[Prop. 3.1]{Rosengarten_picard}, their groups of extensions by $\mathbb{G}_m$ are equal to their Picard groups.

\begin{nota}
The $K$-character group of an algebraic $K$-group $G$ is denoted by $\widehat{G}(K)$
\end{nota}

		\subsection{Universal covers of perfect groups}

In \cite{ConradPrasad_classification}, Conrad and Prasad establish a result on \emph{tame} central extensions of smooth connected linear algebraic groups which are perfect. It generalizes the notion of universal cover for semisimple groups.

We sum up \cite[Th. 5.1.3]{ConradPrasad_classification} for the convenience of the reader.

\begin{defi}[{\cite[Def. 5.1.1]{ConradPrasad_classification}}]
\begin{enumerate}
\item An affine group scheme $\mu$ over $k$ is \emph{tame} if $\mu$ has no non-trivial unipotent $k$-subgroup (\cite[Def. 5.1.1]{ConradPrasad_classification}).
\item A central extension of a linear algebraic $k$-group $G$ \[ 1 \to \mu \to H \to G \to 1 \] is said to be \emph{tame} if the group $\mu$ is affine and tame.
\end{enumerate}
\end{defi}

We will be interested in tame extensions \[ 1 \to \mu \to H \to G \to 1 \] of smooth connected perfect linear algebraic groups $G$ where $\mu$ is \emph{central} in $H$ and $H$ is also a smooth, connected and perfect algebraic group.

Recall the following construction : given a smooth and connected linear algebraic group $H$ over a field $K$, if the radical $\bar{R}$ of $H_{\bar{K}}$ is defined over $K$, \textit{i.e.} comes from an algebraic group $R/K$, then the \emph{semisimple quotient} of $H$ is $H^{ss}:=H/R$ — in that case $H/R$ is a semisimple $K$-group.

\begin{thm}[{\cite[Th. 5.1.3]{ConradPrasad_classification}}]
\label{thm revetement}
Let $G$ be a perfect smooth connected linear algebraic $k$-group. Take $K/k$ to be the definition field of the unipotent radical $R_u(G_{\bar{k}})$ of $G_{\bar{k}}$.

Then, for any tame central extension \[(E) \: \: \: 1 \to \nu \to H \to G \to 1 , \] where $H$ is a perfect smooth connected linear $k$-group, the minimal field of definition of $R_u(H_{\bar{k}})$ is $K$.

Also, there exists a tame central extension \[ \: (E_0) \: \: 1 \to \mu \to \widetilde{G} \to G \to 1 , \] where $\tilde{G}$ is a perfect smooth connected linear algebraic $k$-group and this extension satifies the following property : for any other tame central extension ${E}$ \[(E) \: \: \: 1 \to \nu \to H \to G \to 1 , \] where $H$ is a perfect smooth connected linear $k$-group, the semisimple group $(\widetilde{G}_K)^{ss}$ is a covering of $(H_K)^{ss}$ and the set of (isomorphism classes of) morphisms ${E}_0 \to {E}$ is in a one-to-one correspondence via the obvious map with the set of homomorphisms $(\widetilde{G}_K)^{ss} \to (H_K)^{ss}$.

The extension $E_0$ is characterized by the fact that $\widetilde{G}_K^{ss}$ is the universal covering of the semisimple group $G_K^{ss}$.
\end{thm}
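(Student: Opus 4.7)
The plan is to reduce the problem to the classical theory of simply connected covers of semisimple groups, applied over the field $K$ where $R_u(G_{\bar{k}})$ becomes defined, and then to push the construction back down to $k$ via Weil restriction through the (finite, purely inseparable) extension $K/k$. Before constructing $(E_0)$, I would prove the stability assertion on $K$. For a tame central extension $(E): 1 \to \nu \to H \to G \to 1$ with $H$ perfect smooth connected, work over $\bar{k}$: the image of $R_u(H_{\bar{k}})$ in $G_{\bar{k}}$ is smooth, connected, unipotent and normal, hence sits inside $R_u(G_{\bar{k}})$; conversely the preimage $\pi^{-1}(R_u(G_{\bar{k}}))$ is a central extension of $R_u(G_{\bar{k}})$ by $\nu_{\bar{k}}$, and tameness of $\nu$ forbids $\nu_{\bar{k}}$ from contributing any unipotent part, so $R_u(H_{\bar{k}}) \to R_u(G_{\bar{k}})$ is an isomorphism. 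This identification is Galois-equivariant, so the minimal fields of definition agree.

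To construct $(E_0)$, set $G_K^{ss} := G_K/R_u(G_K)$, a connected semisimple $K$-group, and let $\widetilde{G_K^{ss}} \to G_K^{ss}$ be its algebraic simply connected cover, with finite central multiplicative-type kernel $\mu_0$. Using the adjunction morphism $G \to \mathrm{R}_{K/k}(G_K^{ss})$, form over $k$ the fibre product
\[ P := \mathrm{R}_{K/k}(\widetilde{G_K^{ss}}) \times_{\mathrm{R}_{K/k}(G_K^{ss})} G . \]
Then $P$ is a central extension of $G$ by $\mathrm{R}_{K/k}(\mu_0)$, which remains tame since $K/k$ is purely inseparable. I would define $\widetilde{G}$ as the derived $k$-subgroup of $P$, verify that it is smooth, connected and perfect, and check that $\widetilde{G} \to G$ is still surjective (which follows from the perfectness of $G$) with central tame kernel $\mu := \widetilde{G} \cap \mathrm{R}_{K/k}(\mu_0)$. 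This yields the desired $(E_0)$, and passing to $K$ and taking the semisimple quotient is designed to recover $\widetilde{G_K^{ss}}$ as the simply connected cover of $G_K^{ss}$.

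For the universal property, given $(E)$ as in the statement, the first step provides the semisimple quotient $H_K^{ss}$ and a central isogeny $H_K^{ss} \to G_K^{ss}$. The classical universal property of $\widetilde{G_K^{ss}}$ then produces a unique $K$-morphism $\widetilde{G_K^{ss}} \to H_K^{ss}$ over $G_K^{ss}$. I would lift this to a $k$-morphism $(E_0) \to (E)$ by applying $\mathrm{R}_{K/k}$, pulling back along $G \to \mathrm{R}_{K/k}(G_K^{ss})$, and restricting to derived $k$-subgroups; uniqueness of the lift comes from the fact that two lifts differ by a $k$-morphism $\widetilde{G} \to \nu$, which must be trivial since $\widetilde{G}$ is a perfect smooth connected linear $k$-group while $\nu$ is tame and commutative, admitting no non-trivial homomorphism from such a $\widetilde{G}$.

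The main obstacle is the second step: showing that the derived subgroup $\widetilde{G}$ of $P$ is smooth, connected, perfect with central tame kernel, and that its base change to $K$ has the expected semisimple quotient $\widetilde{G_K^{ss}}$. This hinges on a careful analysis of the pseudo-reductive $k$-group $\mathrm{R}_{K/k}(\widetilde{G_K^{ss}})$ and on compatibilities between the derived-group functor and Weil restriction through purely inseparable extensions. This is essentially where the content of \cite[Th. 5.1.3]{ConradPrasad_classification} is concentrated, and I would invoke \emph{loc. cit.} rather than redoing the verification in detail.
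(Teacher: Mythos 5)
This theorem is not proved in the paper at all: it is stated with the citation \cite[Th.\ 5.1.3]{ConradPrasad_classification} in its header, and the surrounding text says explicitly that it is only \emph{summing up} that result for the reader's convenience. So the paper's ``proof'' is the citation itself, and your sketch, which reconstructs in outline the actual Conrad--Prasad construction (form the fibre product of $G \to \mathrm{R}_{K/k}(G_K^{ss})$ against $\mathrm{R}_{K/k}$ of the simply connected cover of $G_K^{ss}$, pass to the derived group, and deduce the universal property from that of the simply connected cover over $K$) before ultimately deferring the hard verifications to \emph{loc.\ cit.}, is compatible with what the paper does. Two of your bridging claims are glossed too quickly to stand on their own, and they are precisely where the content of the cited theorem lives. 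First, $\mathrm{R}_{K/k}(\widetilde{G_K^{ss}}) \to \mathrm{R}_{K/k}(G_K^{ss})$ is \emph{not} surjective when the isogeny is inseparable (this failure is exactly why groups of the form $\mathrm{R}_{K/k}(G')/\mathrm{R}_{K/k}(\mu')$ are non-reductive and interesting), so surjectivity of $P \to G$ is not formal; it needs the fact that the image of the restricted isogeny contains the derived group of $\mathrm{R}_{K/k}(G_K^{ss})$ together with perfectness of $G$ forcing the image of $G$ into that derived group --- your parenthesis points the right way, but this is a step requiring \cite[Prop.\ 1.3.4]{CGP}-type input, not a one-line remark. Second, in the field-of-definition assertion, ``Galois-equivariance'' is not the relevant mechanism: the minimal field of definition $K$ of $R_u(G_{\bar{k}})$ is a purely inseparable extension of $k$, so Galois descent says nothing about it; one must compare the fields of definition of $R_u(H_{\bar{k}}) \subseteq H_{\bar{k}}$ and of its image $R_u(G_{\bar{k}}) \subseteq G_{\bar{k}}$ directly, and while images descend, the passage from the preimage of $R_u(G_{\bar{k}})$ to its smooth connected part does not obviously commute with inseparable base change. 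Given that the paper imports the whole statement from the reference, deferring these points to \emph{loc.\ cit.}\ as you do at the end is the appropriate course; just be aware that the two steps above are part of what is being deferred, not consequences of what you have already established.
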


The extension $E_0$ (or just the group $\widetilde{G}$) is called the \emph{universal tame central extension/covering} of $G$.

\begin{ex}
Let $k^\prime/k$ be a finite, purely inseparable field extension. Let $G^\prime$ be a semisimple, simply connected $k^\prime$-group and let $\mu^\prime$ be a central subgroup of $G^\prime$. The $k$-group $G:= \mathrm{R}_{k^\prime/k}(G^\prime)/\mathrm{R}_{k^\prime/k}(\mu^\prime)$ admits an extension \[({E}_0) \: \: \: 1 \to \res (\mu^\prime) \to \res(G^\prime) \to G \to 1. \] and the following holds : \begin{itemize}
\item $\res (\mu^\prime)$ is tame and central in $\res(G^\prime)$ (for tameness, see the discussion following \cite[Def. 5.1.1]{ConradPrasad_classification});
\item $\res(G^\prime)$ and $G$ are smooth, connected and perfect algebraic groups (\cite[Prop. 1.1.10 \& 1.3.4]{CGP}) - actually they are pseudo-semisimple.
\end{itemize} Then, since $(\res (G^\prime)_{\bar{k}})^{ss}=G^\prime_{\bar{k}}$ is simply connected, $E_0$ must be the universal tame central extension of $G$.
\end{ex}

		\subsection{Extension groups of perfect groups}

Let $G$ be a perfect smooth connected linear algebraic $k$-group. Theorem \ref{thm revetement} yields the universal tame central extension \[ ({E}_0) \: \: 1 \to \mu \to \widetilde{G} \to G \to 1  \] where $\widetilde{G}$ is a perfect connected smooth linear algebraic $k$-group. As for semisimple groups, one defines a homomorphism $\Theta : \widehat{\mu}(k) \to \mathrm{Ext}^1(G,\mathbb{G}_m)$ as follows : for any group homomorphism $\chi : \mu \to \mathbb{G}_m$, consider the pushforward of $E_0$ with respect to $\chi$, that is the extension \[ ({E}_\chi) \: \: \: 1 \to \mathbb{G}_m \to H \to G \to 1 \] where $H$ is the quotient of $\widetilde{G} \times_k \mathbb{G}_m$ by $Z_\chi:=\{ (x, \chi(x)^{-1}) \; | \; x \in \mu \}$. There is a commutative diagram \begin{equation}
\label{diag poussée}
\xymatrix{
 1 \ar[r] & \mu \ar[r] \ar[d]^{\chi} & \widetilde{G} \ar[d] \ar[r] & G \ar[d]^{=} \ar[r] & 1 \\
 1 \ar[r] & \mathbb{G}_m \ar[r] & H \ar[r] & G \ar[r] & 1
} . \end{equation} The map $\Theta$ is then defined to be the map that sends $\chi \in \widehat{\mu}(k)$ to the isomorphism class $\mathcal{E}_\chi$ of ${E}_\chi$. The map $\Theta$ is easily seen to be a group homomorphism.

\begin{prop}
\label{prop groupe extensions pseudo ss}
Let $G$ be a perfect smooth connected linear algebraic $k$-group. Denote the kernel of its universal tame central extension by $\mu$. Then the homomorphism $\Theta : \widehat{\mu}(k) \to \mathrm{Ext}^1(G,\mathbb{G}_m)$ is an isomorphism.
\end{prop}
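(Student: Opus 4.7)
The plan is to establish injectivity and surjectivity of $\Theta$ separately. Injectivity will rely on the fact that the perfect group $\widetilde{G}$ admits no non-trivial characters, and surjectivity on a derived-subgroup construction combined with the universal property of $E_0$ (Theorem \ref{thm revetement}).

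\textbf{Injectivity.} Suppose $\chi \in \widehat{\mu}(k)$ satisfies $\Theta(\chi) = 0$, so the extension $E_\chi : 1 \to \mathbb{G}_m \to H_\chi \to G \to 1$ splits via some section $s : G \to H_\chi$. Pull $E_\chi$ back along $\pi : \widetilde{G} \to G$ to obtain a central extension of $\widetilde{G}$ by $\mathbb{G}_m$ carrying two sections: the tautological one induced by the map $\widetilde{G} \to H_\chi$, $x \mapsto [(x,1)]$, and the one pulled back from $s$. By centrality of the kernel their quotient is a homomorphism $\widetilde{G} \to \mathbb{G}_m$, which vanishes since a perfect group has trivial abelianization. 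The resulting equality of sections, restricted to $\mu \subseteq \widetilde{G}$ and combined with the definition of $Z_\chi$, forces $\chi = 1$.

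\textbf{Surjectivity.} Let $E : 1 \to \mathbb{G}_m \to H \to G \to 1$ represent a class of $\ext(G,\mathbb{G}_m)$, and let $H^\prime = \mathscr{D}(H) \subseteq H$ be its derived subgroup, a smooth connected closed $k$-subgroup by standard facts on algebraic groups. Since $G$ is perfect, $H^\prime \to G$ is surjective, and its kernel $K := H^\prime \cap \mathbb{G}_m$ is a closed subgroup of $\mathbb{G}_m$, hence either some $\mu_n$ or $\mathbb{G}_m$ itself. A dimension argument shows $H^\prime$ is perfect: its derived subgroup surjects onto $G$ with kernel contained in the central subgroup $K$, so it is closed of full dimension in the connected $H^\prime$, and therefore equals $H^\prime$. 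Since $K$ is of multiplicative type (hence tame), the sequence $E^\prime : 1 \to K \to H^\prime \to G \to 1$ is a tame central extension with $H^\prime$ perfect smooth connected, so Theorem \ref{thm revetement} supplies a morphism of extensions $E_0 \to E^\prime$ whose restriction to the kernels is a homomorphism $\chi : \mu \to K \hookrightarrow \mathbb{G}_m$. Combining $\widetilde{G} \to H^\prime \hookrightarrow H$ with the inclusion $\mathbb{G}_m \hookrightarrow H$ gives a homomorphism $\widetilde{G} \times \mathbb{G}_m \to H$ (well-defined thanks to centrality of $\mathbb{G}_m$ in $H$) that kills $Z_\chi$, whence an isomorphism of extensions $E_\chi \simeq E$. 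Thus the class of $E$ lies in the image of $\Theta$.

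\textbf{Main obstacle.} The delicate step is the surjectivity part, where one must extract from an arbitrary central extension $E$ a subextension to which Theorem \ref{thm revetement} can be applied. Taking $H^\prime = \mathscr{D}(H)$ does the job, but showing that $H^\prime$ itself is perfect requires the small dimension-and-connectedness argument sketched above. Once this is in place, both injectivity (via perfectness of $\widetilde{G}$) and surjectivity (via universality of $E_0$) are essentially formal.
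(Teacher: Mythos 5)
Your argument is correct in outline but takes a genuinely different route from the paper, which disposes of the proposition in three lines: it quotes the exact sequence $\widehat{\widetilde{G}}(k) \to \widehat{\mu}(k) \overset{\Theta}{\to} \ext(G,\mathbb{G}_m) \to \ext(\widetilde{G},\mathbb{G}_m)$ from \cite[Lem. 3.3]{Rosengarten_picard}, kills the left-hand term by perfectness of $\widetilde{G}$ and the right-hand term by \cite[Lem. 5.3]{Rosengarten_Tamagawa}. You instead reprove both halves by hand: your injectivity argument (the difference of two sections of a central extension by $\mathbb{G}_m$ is a character, which vanishes on the perfect $\widetilde{G}$) is essentially the content of exactness at $\widehat{\mu}(k)$ in Rosengarten's lemma, and your surjectivity argument replaces the nontrivial vanishing $\ext(\widetilde{G},\mathbb{G}_m)=0$ by a direct appeal to the universal property of $E_0$ applied to the derived subgroup of an arbitrary extension. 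This is the classical argument for semisimple groups and has the merit of being self-contained given Theorem \ref{thm revetement}; what it costs is the extra work of producing a tame subextension $E'$ of $E$.

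One step needs repair: the ``dimension argument'' for perfectness of $H' = \mathscr{D}(H)$. You argue that $\mathscr{D}(H')$ surjects onto $G$ with kernel contained in $K$, hence is of full dimension in $H'$; but $\dim H' = \dim G + \dim K$, so the count only closes when $K$ is finite. If $K = \mathbb{G}_m$ it merely gives $\dim \mathscr{D}(H') \geq \dim H' - 1$. Either treat that case separately (if $K=\mathbb{G}_m$ then $\mathbb{G}_m \subseteq H' \subseteq H$ and $H'$ surjects onto $G$, which forces $H'=H=\mathscr{D}(H)$, so $H$ itself is perfect), or, more cleanly, use the standard observation that in a central extension the commutator $[x,y]$ depends only on the images of $x$ and $y$ in $G$; since $\mathscr{D}(H) \to G$ is onto, every commutator of $H$ is already a commutator of elements of $\mathscr{D}(H)$, whence $\mathscr{D}(H)=\mathscr{D}(\mathscr{D}(H))$ directly (argue on $\bar{k}$-points, where the derived group of a smooth connected group is the abstract commutator subgroup). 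With that patched, the remaining pieces of the surjectivity step --- tameness of the multiplicative-type group $K \subseteq \mathbb{G}_m$, existence of a morphism $E_0 \to E'$ supplied by Theorem \ref{thm revetement}, and the induced isomorphism of extensions $E_\chi \simeq E$ --- are sound, as is the injectivity argument.
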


\begin{proof}
By \cite[Lem. 3.3]{Rosengarten_picard} one has an exact sequence \[ \widehat{\widetilde{G}}(k) \to \widehat{\mu}(k) \overset{\Theta}{\to} \ext(G,\mathbb{G}_m) \to \ext(\widetilde{G},\mathbb{G}_m)  . \] But since $\widetilde{G}$ is perfect, it has no non trivial character ; and by \cite[Lem. 5.3]{Rosengarten_Tamagawa}, we know that $\ext(\tilde{G},\mathbb{G}_m)$ is trivial.
\end{proof}

\begin{rem}
For $G$ as in Proposition \ref{prop groupe extensions pseudo ss}, the canonical inclusion $\mathrm{Ext}^1(G,\mathbb{G}_m) \hookrightarrow \pic(G)$ is an isomorphism, so $\pic(G) \simeq \widehat{\mu}(k)$. This follows either by Proposition \ref{prop additivite picard} and Theorem \ref{prop groupe retract rationnel}, or by \cite[Prop. 3.2]{Rosengarten_picard}.
\end{rem}

		\subsection{Application to some pseudo-semisimple groups}

Pseudo-semisimple groups defined as quotients $\res(G^\prime)/\res(\mu^\prime)$ (where $k^\prime/k$ is a finite, non-trivial purely inseparable extension, $G^\prime/k^\prime$ is a non-trivial semisimple and simply connected and $\mu^\prime \subset G^\prime$ is central) form a wide family of non-reductive groups. We have seen that the kernels of their universal tame extensions are of the form $\res(\mu^\prime)$ for groups $\mu^\prime$ which are finite and of multiplicative type. But thanks to Theorem \ref{thm caracteres restriction weil}, the character groups of such a $\res(\mu^\prime)$ is known when $\mu^\prime$ is \'etale ou diagonalisable, and we get the following proposition.

\begin{prop}
\label{prop inv groupes pss}
Let $k^\prime/k$ be a finite, purely inseparable field extension. Write $p$ for the characteristic exponent of $k$ and define $h$ to be the least non-negative integer such that $(k^\prime)^{p^h} \subseteq k$. Let $G^\prime$ be a semisimple, simply connected $k^\prime$-group and let $\mu^\prime$ be a central subgroup of $G^\prime$. Write $G=\res(G^\prime)/ \res(\mu^\prime)$. If $\mu^\prime$ is diagonalisable, then there is an isomorphism  \[ \ext(G,\mathbb{G}_m) \simeq p^h \widehat{\mu^\prime}(k^\prime) . \] If $\mu^\prime$ is \'etale and of multiplicative type, the isomorphism still holds but we have simply $p^h \widehat{\mu^\prime}(k^\prime) = \widehat{\mu^\prime}(k^\prime)$.
\hfill \qed
\end{prop}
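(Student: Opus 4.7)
The plan is to assemble the proposition from three ingredients already developed earlier in the paper, in a straightforward sequence. First I would invoke the example following Theorem \ref{thm revetement}, which identifies
\[ 1 \to \res(\mu^\prime) \to \res(G^\prime) \to G \to 1 \]
as the universal tame central extension of $G = \res(G^\prime)/\res(\mu^\prime)$. The hypotheses that $G^\prime$ is semisimple and simply connected, that $\mu^\prime$ is central and of multiplicative type, and that $k^\prime/k$ is finite purely inseparable are precisely what is needed to apply that example and to guarantee that $\widetilde{G}_{\bar k}^{ss} = G^\prime_{\bar k}$ is the simply connected cover.

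Second, I would apply Proposition \ref{prop groupe extensions pseudo ss} to this universal tame central extension. This immediately yields a canonical isomorphism $\ext(G, \mathbb{G}_m) \simeq \widehat{\res(\mu^\prime)}(k)$ via the pushforward map $\Theta$, and thereby reduces the problem to computing the $k$-character group of the Weil restriction $\res(\mu^\prime)$ of a multiplicative-type group.

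Third, and this is the step that carries the real technical content, I would invoke Theorem \ref{thm caracteres restriction weil} from Appendix \ref{section caracteres weil}, whose very purpose is to compute character groups of such Weil restrictions. In the diagonalisable case it delivers the isomorphism $\widehat{\res(\mu^\prime)}(k) \simeq p^h\,\widehat{\mu^\prime}(k^\prime)$, where $p^h\,\widehat{\mu^\prime}(k^\prime)$ denotes the image of multiplication by $p^h$ on $\widehat{\mu^\prime}(k^\prime)$. This is the main obstacle of the statement, but the actual computation of the characters of a Weil restriction through an inseparable extension is packaged into the appendix, so here I only need to check that its hypotheses are met.

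Finally, for the étale case I would observe that $\mu^\prime$ is automatically finite, being a central subgroup of the semisimple simply connected group $G^\prime$; and a finite étale group of multiplicative type over a field of characteristic exponent $p$ necessarily has order coprime to $p$. Hence multiplication by $p^h$ is an automorphism of the finite abelian group $\widehat{\mu^\prime}(k^\prime)$, giving $p^h\,\widehat{\mu^\prime}(k^\prime) = \widehat{\mu^\prime}(k^\prime)$ and the stated isomorphism (the characteristic-zero case being trivial since then $h = 0$).
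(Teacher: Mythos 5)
Your proposal matches the paper's own (implicit) proof exactly: the paper leaves Proposition \ref{prop inv groupes pss} as an immediate consequence of the Example identifying $1 \to \res(\mu^\prime) \to \res(G^\prime) \to G \to 1$ as the universal tame central extension, Proposition \ref{prop groupe extensions pseudo ss}, and Theorem \ref{thm caracteres restriction weil} (with the étale case handled as in Remark \ref{remarque caracteres}(2), i.e.\ by the coprimality of the order of $\mu^\prime$ to $p$). The only point left tacit in both your argument and the paper's is that $\mu^\prime$, being diagonalisable or étale of multiplicative type over the purely inseparable extension $k^\prime$, descends to $k$ so that Theorem \ref{thm caracteres restriction weil} applies in the form stated.
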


\begin{ex}
Let $m$ be a non negative integer and let $k^\prime/k$ be a finite, purely inseparable extension. Recall the notation $p$ and $h$ from Proposition \ref{prop inv groupes pss}. One has a group isomorphism : \[ \ext(\res(\mathrm{SL}_{p^m,k^\prime})/\res(\mu_{p^m,k^\prime}), \mathbb{G}_m) \simeq p^h \cdot \mathbb{Z}/p^m \mathbb{Z} , \] where $\mu_{p^m,k^\prime}$ is the group of $p^m$-th roots of unity inside the special linear group $\mathrm{SL}_{p^m,k^\prime}/k^\prime$. Notice that if $h \geqslant m$, then the extension group is trivial.
\end{ex}


		\subsection{Invariance by scalar extensions}

For a reductive group $G/k$, by \cite[Prop. 3.3]{Colliot_resolutions} there exist two $k$-tori $T$ and $S$ such that \[ \widehat{T}(L) \to \widehat{S}(L) \to \pic(G_L) \to 0 \] for all field extensions $L/k$. It is then straighforward to see that the Picard group of $G$ is invariant under a purely inseparable field extension, and the same invariance property holds for any field extension when $G$ is moreover split. Here is an analogue of the latter statement for standard pseudo-semisimple groups.

In the sequel we will focus on \emph{standard} pseudo-semisimple groups. Note that all pseudo-reductive groups are standard in characteristic $p>3$ (\cite[Th.1.5.1]{CGP}). The standardness hypothesis is used here to have a concrete grip on the studied groups.

We recall the definition of standardness from \cite[\S 1.4]{CGP}. Let $G$ be a pseudo-reductive $k$-group. The group $G$ is called {standard} if it is isomorphic to a central quotient \[ \frac{ \left( \prod_{i=1}^{r} \mathrm{R}_{k_i^\prime/k}(G_i^\prime) \right) \rtimes C}{\prod_{i=1}^{r} \mathrm{R}_{k_i^\prime/k}(T_i^\prime)} \] for \emph{standard data} $\mathcal{D}=\left( (k_i^\prime)_i,(G_i^\prime)_i,(T_i^\prime)_i,C,\phi,\psi \right)$ where : \begin{itemize}
\item $k_1^\prime, \cdots,k_r^\prime$ are finite field extensions  of $k$;
\item $G_i^\prime/k_i^\prime$ are connected reductive groups and $T_i^\prime \subset G_i^\prime$ maximal $k_i^\prime$-tori;
\item $C$ is a commutative pseudo-reductive group which fits into a factorization \[ \prod_{i=1}^r \mathrm{R}_{k_i^\prime/k}(T_i^\prime) \overset{\phi}{\longrightarrow} C \overset{\psi}{\longrightarrow} \prod_{i=1}^r \mathrm{R}_{k_i^\prime/k}(T_i^\prime/Z_i^\prime) \] of the product of the canonical maps $\mathrm{R}_{k_i^\prime/k}(T_i^\prime) \to \mathrm{R}_{k_i^\prime/k}(T_i^\prime/Z_i^\prime)$ where $Z_i^\prime$ stands for the center of $G_i^\prime$;
\item the group $C$ acts on $\prod_{i=1}^r \mathrm{R}_{k_i^\prime/k}(G_i^\prime)$ by conjugation via $\phi$;
\item and finally, $\prod_{i=1}^r \mathrm{R}_{k_i^\prime/k}(T_i^\prime)$ is seen as a central subgroup of the semi-direct product through \[ t=(t_i)_i \mapsto \left( t, \phi(t)^{-1} \right) . \]
\end{itemize} 

We have :

\begin{thm}
\label{prop rigidite pss}
Let $G$ be a standard pseudo-semisimple $k$-group which is pseudo-split. For any separable extension $K/k$ (finite or not), the scalar extension homomorphism $\ext(G,\mathbb{G}_m) \to \ext(G_K,\mathbb{G}_m)$ is an isomorphism.
\end{thm}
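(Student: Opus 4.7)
The plan is to apply Proposition~\ref{prop groupe extensions pseudo ss} on both sides to identify $\ext(G,\mathbb{G}_m) \simeq \widehat{\mu}(k)$ and $\ext(G_K,\mathbb{G}_m) \simeq \widehat{\mu_K}(K)$, where $\mu$ denotes the kernel of the universal tame central extension of $G$. The characterization of this extension in Theorem~\ref{thm revetement} (via the universal cover of the semisimple quotient over $\bar{k}$) is preserved under the separable base change $k \hookrightarrow K$, so forming it commutes with passage to $K$. Under these identifications, the scalar extension map on $\ext$-groups becomes the natural character-restriction map $\widehat{\mu}(k) \to \widehat{\mu_K}(K)$, and it suffices to show this is an isomorphism.

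Next I would make $\mu$ explicit using standardness and pseudo-splitness. Combining \cite[\S 1.4]{CGP} with the fact that $G$ is perfect, one can write $G = \prod_i \mathrm{R}_{k'_i/k}(G'_i)/\prod_i \mathrm{R}_{k'_i/k}(\mu'_i)$ with each $G'_i/k'_i$ semisimple simply connected and each $\mu'_i \subset G'_i$ finite central; the example following Theorem~\ref{thm revetement} then identifies the universal tame central extension of $G$ as $\widetilde{G} = \prod_i \mathrm{R}_{k'_i/k}(G'_i)$, so $\mu = \prod_i \mathrm{R}_{k'_i/k}(\mu'_i)$. Pseudo-splitness of $G$ forces each $G'_i$ to be $k'_i$-split, hence each $\mu'_i$ is a $k'_i$-split diagonalisable group.

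Applying Theorem~\ref{thm caracteres restriction weil} factorwise yields
\[
\widehat{\mu}(k) \simeq \bigoplus_i p^{h_i}\,\widehat{\mu'_i}(k'_i),
\]
where $h_i$ is the least exponent such that $(k'_i)^{p^{h_i}} \subseteq k$. Since $K/k$ is separable and $k'_i/k$ is purely inseparable, $K'_i := K \otimes_k k'_i$ is a field, $\mu_K = \prod_i \mathrm{R}_{K'_i/K}((\mu'_i)_{K'_i})$, and the analogous formula reads
\[
\widehat{\mu_K}(K) \simeq \bigoplus_i p^{h'_i}\,\widehat{(\mu'_i)_{K'_i}}(K'_i),
\]
with $h'_i$ minimal such that $(K'_i)^{p^{h'_i}} \subseteq K$. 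The character group of a $k'_i$-split diagonalisable group is unchanged under base change, so $\widehat{(\mu'_i)_{K'_i}}(K'_i) = \widehat{\mu'_i}(k'_i)$. The equality $h'_i = h_i$ reduces to $K \cap k'_i = k$ inside $K'_i$ (a standard consequence of separability of $K/k$ pitted against pure inseparability of $k'_i/k$): if one had $(K'_i)^{p^{h_i-1}} \subseteq K$, then every $\alpha \in k'_i$ would satisfy $\alpha^{p^{h_i-1}} \in K \cap k'_i = k$, contradicting the minimality of $h_i$. The remaining point is that the isomorphisms are compatible with the scalar extension map factor by factor, which follows because $\Theta$ is defined by pushforward of extensions and pushforward commutes with base change.

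The main obstacle is the first structural step: extracting the clean product presentation $G = \prod_i \mathrm{R}_{k'_i/k}(G'_i)/\prod_i \mathrm{R}_{k'_i/k}(\mu'_i)$ from the general definition of standardness (in which a commutative twist $C$ a priori appears) and matching the resulting tame central extension with the example after Theorem~\ref{thm revetement}. Once this structural reduction is in place, the character-group computations and the invariance of the exponents $h_i$ under $k \subseteq K$ are essentially formal.
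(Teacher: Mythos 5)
Your overall strategy---reduce via Proposition \ref{prop groupe extensions pseudo ss} to showing that $\widehat{\mu}(k)\to\widehat{\mu}(K)$ is an isomorphism, after noting that the universal tame central extension is compatible with separable base change---is exactly the paper's. The gap is the structural claim that you yourself flag as ``the main obstacle'': that $G$ admits a presentation $\prod_i \mathrm{R}_{k'_i/k}(G'_i)/\prod_i\mathrm{R}_{k'_i/k}(\mu'_i)$ in which the kernel is a \emph{product of Weil restrictions} of central subgroups $\mu'_i\subseteq G'_i$. This is not available, and you do not close it. What standardness (refined by \cite[Th. 4.1.1]{CGP}) gives is a surjection $\pi\colon \widetilde{G}=\prod_i\mathrm{R}_{k'_i/k}(G'_i)\to G$ whose kernel $Z$ (namely $\ker\phi$ in the standard presentation) is merely \emph{some} central closed subgroup of $\prod_i\mathrm{R}_{k'_i/k}(Z'_i)$; it can be a diagonal or otherwise ``non-split'' subgroup and in general is not of the form $\prod_i\mathrm{R}_{k'_i/k}(\mu'_i)$. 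Consequently Theorem \ref{thm caracteres restriction weil} does not apply to $Z$, and your formula $\widehat{\mu}(k)\simeq\bigoplus_i p^{h_i}\,\widehat{\mu'_i}(k'_i)$ is only justified in the special case already covered by Proposition \ref{prop inv groupes pss}.

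The paper circumvents this by never computing $\widehat{Z}$ explicitly. Pseudo-splitness is used to show that each $k'_i/k$ is purely inseparable and each $G'_i$ is split, so that $Z'_i$ descends to a diagonalisable $k$-group $Z_i$; then $Z_0=\prod_i Z_i$ is the maximal multiplicative-type subgroup of $Z_0^\dagger=\prod_i\mathrm{R}_{k'_i/k}(Z'_i)$ and $Z_0^\dagger/Z_0$ is unipotent. Hence the restriction map $\widehat{Z}(L)\to\widehat{(Z\cap Z_0)}(L)$ is injective for every extension $L/k$, and since $Z\cap Z_0$ is diagonalisable its character group does not change under field extension. This forces the Galois action on $\widehat{Z}(k_s)$ and the $\mathrm{Aut}(k(t)/k)$-action on $\widehat{Z}(k(t))$ to be trivial, and a direct-limit argument over finitely generated separable subextensions then yields $\widehat{Z}(k)=\widehat{Z}(K)$ for arbitrary separable $K/k$. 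Your verification that the exponents $h_i$ are preserved under separable base change is correct but becomes unnecessary in this formulation. To repair your proof you would either have to establish the clean product presentation (which fails in general) or replace the appeal to Theorem \ref{thm caracteres restriction weil} by an argument valid for an arbitrary central subgroup $Z\subseteq\prod_i\mathrm{R}_{k'_i/k}(Z'_i)$---which is essentially what the paper does.
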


In order to prove Theorem \ref{prop rigidite pss}, we discuss first some properties of the standard data $\mathcal{D}$ of a given standard pseudo-semisimple group $G$ which is pseudo-split. The following paragraphs and argument are due to the anonymous referee.

First, there exist standard data $\mathcal{D}$ such that $G_i^\prime$ are absolutely simple and simply connected (this is \cite[Th. 4.1.1]{CGP}). We write $\widetilde{G}$ for $\prod_{i=1}^{r} \mathrm{R}_{k_i^\prime/k}(G_i^\prime)$. With such $G_i^\prime$'s the homomorphism \[ \pi : \widetilde{G}  \to \frac{ \left( \prod_{i=1}^{r} \mathrm{R}_{k_i^\prime/k}(G_i^\prime) \right) \rtimes C}{\prod_{i=1}^{r} \mathrm{R}_{k_i^\prime/k}(T_i^\prime)} \simeq G \] is surjective by \cite[Prop. 4.1.4(1)]{CGP} and since $\prod_{i=1}^{r} \mathrm{R}_{k_i^\prime/k}(T_i^\prime)$ is central in the semi-direct product, $\pi$ has central kernel $Z$, so $Z$ is in the center of $\widetilde{G}$ which is $\prod_{i=1}^r \mathrm{R}_{k_i^\prime/k}(Z_i^\prime)$ by \cite[Prop. A.5.15(1)]{CGP}.

Then, take a $k$-split maximal $k$-torus $S$ in $G$. Restricting $\pi$ to $\pi^{-1}(S)$ yields a surjective homomorphism $\pi^{-1}(S) \to S$. Taking a maximal $k$-torus $\widetilde{S} \subset \pi^{-1}(S)$, its image by $\pi$ is exactly $S$ because of \cite[Prop. A.2.8]{CGP}. Moreover, we see that any maximal torus of $\widetilde{G}$ containing $\widetilde{S}$ is mapped onto a torus containing $S$, so its image is exactly $S$ and it follows that $\widetilde{S}$ must be maximal in $\widetilde{G}$. By \cite[Prop. A.5.12(2)]{CGP}, there are maximal tori $S_i^\prime$ of $G_i^\prime$ over $k_i^\prime$ such that $\widetilde{S}$ is the maximal $k$-torus of $\prod_{i=1}^r \mathrm{R}_{k_i^\prime/k}(S_i^\prime)$. It is possible to describe $\widetilde{S}$ as follows. Define $k_i$ to be the separable closure of $k$ in $k_i^\prime$. The extension $k_i^\prime/k_i$ is finite and purely inseparable, so $S_i^\prime$ descents to a torus $S_i$ over $k_i$. We notice that the quotient $\mathrm{R}_{k_i^\prime/k}(S_i^\prime)/\mathrm{R}_{k_i/k}(S_i)$ is unipotent; in fact, it is seen to be isomorphic to $\mathrm{R}_{k_i/k} \left( \mathrm{R}_{k_i^\prime/k_i}(S_i^\prime) / S_i \right)$ by \cite[Cor. A.5.4(3)]{CGP} but the group $\mathrm{R}_{k_i^\prime/k_i}(S_i^\prime) / S_i$ is unipotent (see the proof of Lemma \ref{csq quotient unipotent}) and the Weil restriction of a unipotent group is still unipotent (this is \cite[Prop. A.3.7]{Oesterle}). The group $\mathrm{R}_{k_i/k}(S_i)$ is then the maximal torus of the abelian group $\mathrm{R}_{k_i^\prime/k}(S_i^\prime)$ and the torus $\widetilde{S}$ is equal to the product of the many $\mathrm{R}_{k_i/k}(S_i)$'s. We now claim that $\pi : \widetilde{S} \to S$ is an isogeny. To see this, we proceed as before : the group $Z_i^\prime$ descents to $Z_i$ over $k_i$ and $\mathrm{R}_{k_i/k}(Z_i)$ is the maximal subgroup of multiplicative type of $\mathrm{R}_{k_i^\prime/k}(Z_i^\prime)$ (here, since $Z_i$ may be non-smooth, we just know that the quotient $\mathrm{R}_{k_i^\prime/k}(Z_i^\prime)/\mathrm{R}_{k_i/k}(Z_i)$ is a \textit{subgroup} of the Weil restriction of the unipotent group $\mathrm{R}_{k_i^\prime/k_i} (Z_i^\prime)/Z_i$). Note that $\mathrm{R}_{k_i/k}(Z_i)$ is finite. Thus, the restriction $\widetilde{S} \to S$ of $\pi$ has kernel $\widetilde{S} \cap Z \subset \prod_{i=1}^r \mathrm{R}_{k_i/k}(Z_i)$, and it follows that $\widetilde{S} \to S$ is an isogeny. This implies that $\widetilde{S}$ is split because $S$ is split. We derive two consequences of interest. The first is that $S_i^\prime$, and hence $G_i^\prime$, are split by \cite[Prop. A.5.15(2)]{CGP}. The second is that the field extensions $k_i^\prime/k$ are purely inseparable. Indeed, the maximal torus $\widetilde{S}$ of $\widetilde{G}$ is split, but we have $\widetilde{S}=\prod_i \mathrm{R}_{k_i/k}(S_i)$ and the torus $\mathrm{R}_{k_i/k}(S_i)$ is $k$-split if, and only if, $k_i=k$.

\begin{proof}[Proof of Theorem \ref{prop rigidite pss}]

Let $G$ be a standard pseudo-semisimple $k$-group which is pseudo-split. Take $\mathcal{D}=\left( (k_i^\prime)_i,(G_i^\prime)_i,(T_i^\prime)_i,C,\phi,\psi \right)$ to be standard data for $G$ with all $G_i^\prime$ absolutely simple, simply connected, and split. Writing $\widetilde{G}$ for \[ \mathrm{R}_{k_1^\prime/k}(G_1^\prime) \times_k \cdots \times_k \mathrm{R}_{k_r^\prime/k}(G_r^\prime), \] the group $G$ is the quotient of $\widetilde{G}$ by a central subgroup $Z$. The group $Z$ is a subgroup of $ \mathrm{R}_{k_1^\prime/k}(Z_1^\prime) \times_k \cdots \times_k \mathrm{R}_{k_r^\prime/k}(Z_r^\prime)$, where $Z_i^\prime$ is the center of $G_i^\prime$. The $Z_i^\prime/k_i^\prime$ are finite of multiplicative type, so $Z$ is a tame $k$-group. Thus \[ 1 \to Z \to \widetilde{G} \to G \to 1 \] is the universal tame extension of $G$ and \[ 1 \to Z_K \to \widetilde{G}_K \to G_K \to 1 \] is that of $G_K$ ($K/k$ is the separable extension given in the statement of the theorem). Let's denote by $\phi$ the homomorphism $\widehat{Z}(k) \to \ext(G,\mathbb{G}_m)$ from Proposition \ref{prop groupe extensions pseudo ss} for $G$, and $\phi_K$ the similar homomorphism for $G_K$. There is a commutative diagram \[ \xymatrix{
\widehat{Z}(k) \ar[r]^-\phi \ar[d] & \ext(G,\mathbb{G}_m) \ar[d] \\
\widehat{Z}(K) \ar[r]_-{\phi_K} & \ext(G_K,\mathbb{G}_m)
} \] where the vertical arrows are obtained by scalar extension and the horizontal ones are isomorphisms by Proposition \ref{prop groupe extensions pseudo ss}. So it remains to show that $\widehat{Z}(k) \to \widehat{Z}(K)$ is an isomorphism.

The groups $Z_i^\prime$ are defined over $k$ : $Z_i^\prime=(Z_i)_{k_i^\prime}$ for a finite diagonalisable $k$-group $Z_i$. Write \[ Z_0:= Z_1 \times_k \cdots \times_k Z_r \text{ and } Z_0^\dagger:=\mathrm{R}_{k_1^\prime/k}(Z_1^\prime) \times_k \cdots \times_k \mathrm{R}_{k_r^\prime/k}(Z_r^\prime) . \] As explained in the paragraph before the proof or in the proof of Lemma \ref{csq quotient unipotent}, $Z_0$ is the maximal subgroup of multiplicative type inside $Z_0^\dagger$ and the quotient $Z_0^\dagger /Z_0$ is unipotent. Since unipotent groups have only trivial character (\cite[Exp. XVII, Prop. 2.4]{SGA3}), the restriction homomorphism $\widehat{Z}(L) \to \widehat{(Z \cap Z_0)}(L)$ is injective for all field extensions $L/k$. We will see now that this implies that $\widehat{Z}(k) \to \widehat{Z}(K)$ is a bijection. Note that since $Z \cap Z_0$ is diagonalizable, $\widehat{(Z \cap Z_0)}(k)=\widehat{(Z \cap Z_0)}(L)$ for an arbitrary extension $L/k$. \begin{itemize}
\item  If $K/k$ is separable algebraic, then $\widehat{Z}(k)=\widehat{Z}(k_s)^{\mathrm{Gal}(k_s/k)}$ and $\widehat{Z}(K)=\widehat{Z}(k_s)^{\mathrm{Gal}(k_s/K)}$. But the injection $\widehat{Z}(k_s) \to \widehat{(Z \cap Z_0)}(k_s)$ tells us that the Galois actions are trivial on $\widehat{Z}(k_s)$. Thus, $\widehat{Z}(k)=\widehat{Z}(K)$ as wanted.
\item If $K=k(t)$ is a purely transcendental extension, one can let $\mathrm{Aut}(K/k)$ act on 
$\widehat{Z}(K)$ and the fixed points are exactly $\widehat{Z}(k)$. But as in the previous point, we know that the action of $\mathrm{Aut}(K/k)$ is trivial on $\widehat{(Z \cap Z_0)}(K)$ and on $\widehat{Z}(k) \subseteq \widehat{(Z \cap Z_0)}(K)$.
\item By combining the two latter points, one sees that $\widehat{Z}(k)=\widehat{Z}(K)$ for any finitely generated separable extension $K/k$.
\item It remains to treat the case of a general separable extension $K/k$. By definition this means that for any finitely generated extension $k \subseteq K_0 \subseteq K$, $K_0/k$ is separably generated. One gets that $\widehat{Z}(K)=\underset{\underset{k \subseteq K_0 \subseteq K}{\longrightarrow}}{\mathrm{lim}} \widehat{Z}(K_0)$, the limit being over finitely generated subextensions of $K/k$. Since by the previous point one has $\widehat{Z}(k)=\widehat{Z}(K_0)$ for any $K_0 \subseteq K$ finitely generated over $k$, one is lead to $\widehat{Z}(k)=\widehat{Z}(K)$ as wanted.
\end{itemize}

In conclusion, the scalar extension homomorphism $\ext(G,\mathbb{G}_m) \to \ext(G_K,\mathbb{G}_m)$ is indeed an isomorphism.
\end{proof}

\appendix

	\section{Character groups of Weil restrictions}
	\label{section caracteres weil}

Let $k^\prime/k$ be a finite, purely inseparable field extension and let $G/k$ be a linear algebraic group. We describe here the character group of $\res(G_{k^\prime})$ in terms of the character group of $G$, for algebraic groups $G$ either smooth and of multiplicative type, or diagonalisable. As an easy byproduct, we also treat the case of a connected reductive group.

\begin{nota}
Call $p$ the characteristic exponent of $k$ and let $h$ be the minimal non-negative integer such that $(k^\prime)^{p^h} \subseteq k$. Also write $G_0$ for $\res(G_{k^\prime})$ and recall that $G$ can be embedded canonically as a closed $k$-subgroup of $G_0$. We recall that for an algebraic $k$-group $H$, $\widehat{H}(k)$ stands for the group of $k$-characters of $H$.
\end{nota}

\begin{lemma}
\label{csq quotient unipotent}
Let $G$ be either smooth and connected or of multiplicative type. Then the restriction homomorphism $\widehat{G_0}(k) \to \widehat{G}(k)$ is injective.
\end{lemma}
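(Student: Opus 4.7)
The plan is to reduce the lemma to showing that the fppf quotient $Q := G_0/G$ is a unipotent $k$-group scheme. The unit of the Weil-restriction adjunction realizes $G$ as a closed $k$-subgroup of $G_0$ (a closed immersion since $G$ is affine and $k'/k$ is finite), so $Q$ is representable by an affine $k$-group scheme, and any $\chi \in \widehat{G_0}(k)$ in the kernel of the restriction to $G$ factors through $Q$. Because unipotent $k$-groups have trivial $k$-character group by \cite[Exp. XVII, Prop. 2.4]{SGA3} (as invoked elsewhere in the paper, e.g.\ in the proof of Theorem \ref{prop rigidite pss}), showing $Q$ is unipotent will suffice.

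When $G$ is smooth and connected, this is the classical description of Weil restrictions through finite purely inseparable extensions: $Q$ is smooth, connected, and unipotent (see \cite[Prop. A.5.11]{CGP}, or the analogous result in \cite{Oesterle}). I would simply invoke this.

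When $G$ is of multiplicative type, I would reduce to the smooth-connected case by choosing a closed $k$-embedding $G \hookrightarrow T$ into some $k$-torus $T$. Such an embedding exists by Cartier duality: the Galois module $\widehat{G}(k_s)$ is finitely generated, hence admits a Galois-equivariant surjection from a finitely generated permutation module $X_0$, whose Cartier dual $T$ is a $k$-torus (a product of Weil restrictions of $\mathbb{G}_m$ along finite separable extensions of $k$). Applying $\res$, which preserves closed immersions, gives a closed embedding $G_0 \hookrightarrow \res(T_{k'})$. A short functor-of-points check shows that $T \cap G_0 = G$ inside $\res(T_{k'})$: for a $k$-algebra $R$, an element of $T(R)$ lies in $G(R \otimes_k k') \subseteq T(R \otimes_k k')$ if and only if it satisfies the $k$-defined equations cutting $G$ out of $T$, i.e.\ if and only if it lies in $G(R)$. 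The snake lemma then produces an injection of fppf sheaves $Q \hookrightarrow \res(T_{k'})/T$. The target is unipotent by the smooth-connected case applied to $T$, so $Q$ is a closed $k$-subgroup scheme of a unipotent group, hence unipotent.

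The main obstacle is the multiplicative-type case when $G$ is not smooth (for instance $G = \mu_p$): the Cartier-duality embedding into a $k$-torus and the intersection computation inside $\res(T_{k'})$ are the only non-routine steps, and the rest is bookkeeping and citing standard results.
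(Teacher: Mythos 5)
Your multiplicative-type case is essentially sound and runs parallel to the paper's own argument: the paper base-changes to $k_s$, embeds the (now diagonalisable) group into a split torus $\mathbb{G}_{m,k_s}^N$ and observes that $(G_0/G)_{k_s}$ is a subgroup of the unipotent group $\mathrm{R}_{k_s^\prime/k_s}(\mathbb{G}_{m,k_s^\prime}^N)/\mathbb{G}_{m,k_s}^N$, whereas you stay over $k$ with a quasi-trivial torus produced by Cartier duality; both routes work, and your intersection computation $T \cap G_0 = G$ is correct (it only uses that $R \to R\otimes_k k^\prime$ is injective and that $G$ is cut out of $T$ by $k$-defined equations).

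The smooth-connected case, however, has a genuine gap. You propose to show that $Q = G_0/G$ is a unipotent $k$-group scheme, but when $G$ is non-commutative the subgroup $G \hookrightarrow G_0 = \mathrm{R}_{k^\prime/k}(G_{k^\prime})$ is not normal — already for $G=\mathrm{SL}_2$, conjugating an element of $\mathrm{SL}_2(k)$ by an element of $\mathrm{SL}_2(k^\prime)$ generally lands outside $\mathrm{SL}_2(k)$ — so $Q$ is only a homogeneous space, not a group, and ``unipotent'' does not apply to it. Moreover \cite[Prop. A.5.11(2)]{CGP} does not say what you attribute to it: it describes the kernel of the counit $(G_0)_{k^\prime} \to G_{k^\prime}$ over $k^\prime$, not the quotient of $G_0$ by the unit copy of $G$ over $k$. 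A character $\chi$ of $G_0$ killing $G$ factors only through the quotient by the normal closure of $G$, which you have not identified. The paper's argument avoids this entirely: after base change to $k^\prime$, the counit exhibits $(G_0)_{k^\prime}$ as a semidirect product $U^\prime \rtimes G_{k^\prime}$ with $U^\prime$ unipotent (the unit being a section of the counit); then $\chi_{k^\prime}$ is trivial on $G_{k^\prime}$ by hypothesis and on $U^\prime$ because unipotent groups have no nontrivial characters, hence trivial on all of $(G_0)_{k^\prime}$, hence $\chi$ is trivial. Your version of the smooth-connected step is valid only for commutative $G$ (e.g.\ tori), which happens to be all you actually use of it in the multiplicative-type case, but it does not prove the lemma as stated.
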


\begin{proof} $\diamond$ First assume that $G$ is smooth and connected. According to \cite[Prop. A.5.11(2)]{CGP}, there is a canonical surjective map $(G_0)_{k^\prime} \to G_{k^\prime}$ whose kernel is unipotent; working with the functors of points, we easily check that the embedding $G_{k^\prime} \to (G_0)_{k^\prime}$ is a section of the projection $(G_0)_{k^\prime} \to G_{k^\prime}$; thus $(G_0)_{k^\prime}$ is a semi-direct product of a unipotent $k^\prime$-group $U^\prime$ by $G_{k^\prime}$. Thus, if $\chi \in G_0(k)$ becomes trivial when restricted to $G$, then $\chi_{k^\prime}$ is constant equal to $1$ on $(G_0)_{k^\prime}= U^\prime \rtimes G_{k^\prime}$ because it is trivial on $G_{k^\prime}$ by assumption and unipotent groups have only trivial characters (see \cite[Exp. XVII, Prop. 2.4]{SGA3}). So $\chi_{k ^\prime}$ is the trivial character, hence so is $\chi$.

$\diamond$ In the case the group $G$ is of multiplicative type, it becomes diagonalisable over $k_s$ and can be embedded in a split torus $\mathbb{G}_{m,k_s}^N$. This way, denoting by $k_s^\prime$ the field $k^\prime \otimes_k k_s$, the group $(G_0/G)_{k_s}$ is seen to be a subgroup of $\mathrm{R}_{k_s^\prime/k_s}(\mathbb{G}_{m,k_s^\prime}^N) / \mathbb{G}_{m,k_s}^N$ which is unipotent by \cite[Prop. A.5.11(2)]{CGP}. It follows that $\widehat{G_0}(k) \to \widehat{G}(k)$ is injective, because if $\chi \in \widehat{G_0}(k)$ is trivial on the subgroup $G \hookrightarrow G_0$, then $\chi$ yields a homomorphism  $\widetilde{\chi} : G_0/G \to \mathbb{G}_m$ and since $G_0/G$ is unipotent, \cite[Exp. XVII, Prop. 2.4]{SGA3} implies that $\widetilde{\chi}$ is trivial. Thus $\chi$ is trivial, whence the result.
\end{proof}

\begin{const}
\label{construction caracteres} For $G$ as in the lemma, one has a homomorphism $\widehat{G}(k) \to \widehat{G}(k^\prime)$ simply given by scalar extension. Also, for $\chi \in \widehat{G}(k^\prime)$, one defines a character of $G_0$ whose expression on $A$-points (for any $k$-algebra $A$) is \[ \begin{array}{ccccc}
G_0(A)=G(A \otimes_k k^\prime) & \overset{\chi}{\longrightarrow} & (A \otimes_k k^\prime)^\ast & {\longrightarrow} & A^\ast \\
g & \longmapsto & \chi(g) & \longmapsto & \chi(g) ^{p^h}
\end{array} \] where the last arrow is the $p^h$-th power map. This defines a homomorphism \[\tau : \widehat{G}(k) \to \widehat{G_0}(k) \] whose composition with the injective restriction homomorphism $\widehat{G_0}(k) \hookrightarrow \widehat{G}(k)$ is $\chi \mapsto (\chi)^{p^h}$. It follows that the subgroup $\widehat{G_0}(k)$ of $\widehat{G}(k)$ contains $p^h \widehat{G}(k)$. We will see that in several cases, $\tau$ is surjective and $\widehat{G_0}(k)$ is the subgroup $p^h \widehat{G}(k)$ of $\widehat{G}(k)$.

\end{const}

\begin{thm}
\label{thm caracteres restriction weil}
Consider the following cases \begin{itemize}
\item[(i)] $G$ is diagonalisable (not necessarily smooth or connected),
\item[(ii)] $G$ is smooth and of multiplicative type,
\item[(iii)] $G$ is connected and reductive.
\end{itemize} Then the restriction homomorphism $\widehat{G_0}(k) \to \widehat{G}(k)$ is injective and identifies $\widehat{G_0}(k)$ with $p^h \widehat{G}(k) \subseteq \widehat{G}(k)$.
\end{thm}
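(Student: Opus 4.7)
The containment $p^h\widehat{G}(k) \subseteq \widehat{G_0}(k)$ inside $\widehat{G}(k)$ is given by Construction \ref{construction caracteres}, whose formula is valid verbatim in all three cases, and injectivity of the restriction $\widehat{G_0}(k) \to \widehat{G}(k)$ is Lemma \ref{csq quotient unipotent}, which applies to any group of multiplicative type (smooth or not) as well as to smooth connected groups. Thus the remaining task is the reverse inclusion $\widehat{G_0}(k) \subseteq p^h\widehat{G}(k)$.

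I would prove this by descending through (iii) $\to$ (ii) $\to$ (i) $\to$ split tori $\to G=\mathbb{G}_{m,k}$. For (iii) $\to$ (ii): setting $D := G/\mathcal{D}(G)$, a $k$-torus, the derived subgroup $\mathcal{D}(G_{k'})$ is semisimple so $\res(\mathcal{D}(G_{k'}))$ is pseudo-semisimple by \cite[Prop. 1.3.4]{CGP}, hence perfect and characterless; every character of $G_0$ therefore factors through $G_0/\res(\mathcal{D}(G_{k'})) \hookrightarrow \res(D_{k'})$, reducing to (ii) applied to $D$. For (ii) $\to$ (i): base-change to $k_s$; since $k'/k$ is purely inseparable and $k_s/k$ is separable, $k_s' := k_s \otimes_k k'$ is a field with the same minimal exponent $h$ over $k_s$, Weil restriction commutes with this base-change, and $G_{k_s}$ is diagonalisable; Galois invariance of the character groups yields the result over $k$ from its counterpart over $k_s$. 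For (i) $\to$ split tori: decompose $G = D(M)$ according to a direct-sum decomposition of $M = \widehat{G}(k)$ into cyclic summands; the $\mu_m$ factors with $\gcd(m,p)=1$ are handled by \cite[Cor. A.5.13]{CGP}, which gives $\res(\mu_{m,k'}) \simeq \mu_{m,k}$ (and in this case $p^h$ is a unit modulo $m$, so $p^h\widehat{G}(k) = \widehat{G}(k)$ tautologically), while the $\mu_{p^a}$ factors can be reduced to $\mathbb{G}_{m,k}$ via Lemma \ref{suite} and a short diagram chase once the theorem is known for $\mathbb{G}_{m,k}$.

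Finally, for $G=\mathbb{G}_{m,k}$ the equality $\widehat{\res_{k'/k}(\mathbb{G}_{m,k'})}(k) = p^h\mathbb{Z}$ inside $\widehat{\mathbb{G}_{m,k}}(k)=\mathbb{Z}$ is exactly \cite[Th. II.2.4]{Oesterle}. The main obstacle I anticipate is case (iii): while characters of $G_0$ clearly factor through the closed subgroup $G_0/\res(\mathcal{D}(G_{k'}))$ of $\res(D_{k'})$, one must further verify that every character of this subgroup extends to a character of $\res(D_{k'})$, i.e., that the cokernel of the embedding has no characters. This should follow from an analysis of the fppf cohomology sequence attached to $1 \to \mathcal{D}(G_{k'}) \to G_{k'} \to D_{k'} \to 1$ together with the pseudo-semisimplicity of $\res(\mathcal{D}(G_{k'}))$, but it is the step requiring the most care. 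The remaining reductions and the Oesterlé base case are essentially bookkeeping.
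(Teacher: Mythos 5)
Your reduction scheme --- (iii) to the maximal torus quotient, (ii) to (i) by Galois descent, (i) to $\mathbb{G}_m$ and to $\mu_{p^a}$ via Lemma \ref{suite}, with Construction \ref{construction caracteres} giving $p^h\widehat{G}(k)\subseteq\widehat{G_0}(k)$ and Lemma \ref{csq quotient unipotent} giving injectivity --- is exactly the paper's. Two points. First, the obstacle you single out in case (iii) is not actually there: since $H=\mathcal{D}(G)$ is smooth, \cite[Cor. A.5.4(3)]{CGP} shows that $1\to \mathrm{R}_{k'/k}(H_{k'})\to G_0\to \mathrm{R}_{k'/k}(D_{k'})\to 1$ is exact, so $G_0/\mathrm{R}_{k'/k}(H_{k'})$ \emph{is} $\mathrm{R}_{k'/k}(D_{k'})$ rather than a proper subgroup of it, and no character-extension question arises; the paper simply reads off $\widehat{G}(k)\simeq\widehat{D}(k)$ and $\widehat{G_0}(k)\simeq\widehat{\mathrm{R}_{k'/k}(D_{k'})}(k)$ from the two left-exact character sequences, using that $\mathrm{R}_{k'/k}(H_{k'})$ is characterless because over $k'$ it is an extension of the perfect group $H_{k'}$ by a unipotent group. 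In the step (ii) $\to$ (i) you should also record why $\widehat{G}(k)\cap p^h\widehat{G}(k_s)=p^h\widehat{G}(k)$: the paper uses that $\widehat{G}(k_s)$ has no $p$-torsion (as $G$ is smooth), so multiplication by $p^h$ is injective and a Galois-invariant element of $p^h\widehat{G}(k_s)$ is $p^h$ times a Galois-invariant element.

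Second, and more substantively, the base case. The paper does not obtain $\widehat{\mathrm{R}_{k'/k}(\mathbb{G}_{m,k'})}(k)=p^h\mathbb{Z}$ by citing \cite[Th. II.2.4]{Oesterle}; it proves it directly, and Remark \ref{remarque caracteres} presents Theorem \ref{thm caracteres restriction weil} only as an \emph{analogue} of Oesterl\'e's statement ``for Weil restrictions through purely inseparable field extensions'', which strongly suggests the cited theorem does not cover this situation verbatim. If your citation does not apply, your whole chain of reductions has nothing to stand on, so you should either verify it carefully or supply the short direct argument the paper gives: the image of the injective restriction $\widehat{\mathrm{R}_{k'/k}(\mathbb{G}_{m,k'})}(k)\hookrightarrow\widehat{\mathbb{G}_m}(k)=\mathbb{Z}$ is $r\mathbb{Z}$ with $r=p^{\alpha}s$; since it contains $p^h$ one gets $s=1$ and $\alpha\le h$; and because the generator $\rho$ raised to the power $p^{h-\alpha}$ and the $p^h$-power character of Construction \ref{construction caracteres} agree on the subgroup $\mathbb{G}_m$, they are equal by Lemma \ref{csq quotient unipotent}, whence $x^{p^h}=\rho(x)^{p^{h-\alpha}}$ for $x\in(k')^{\times}$, so by injectivity of Frobenius $(k'^{\times})^{p^{\alpha}}=\rho(k'^{\times})\subseteq k^{\times}$ and minimality of $h$ forces $\alpha=h$. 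Apart from these two points, your outline matches the paper's proof step for step.
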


\begin{rem}
\label{remarque caracteres}\begin{enumerate}
\item Theorem \ref{thm caracteres restriction weil} is an analogue of Oesterl\'e's \cite[Th. II.2.4]{Oesterle} for Weil restrictions through purely inseparable field extensions. 
\item For finite \'etale groups $G$ of multiplicative type, the statement is trivial. Indeed, the embedding $G \to G_0$ is an isomorphism by \cite[Prop. A.5.13]{CGP} and $p^h \widehat{G}(k) \subseteq \widehat{G_0}(k) \subseteq \widehat{G}(k)$ are equalities since $p^h$ is prime to the order of $G$. 
\end{enumerate} 
\end{rem}

\begin{center}
\underline{\textit{Proof of Theorem \ref{thm caracteres restriction weil} - The beginning}}
\end{center}

\paragraph{\textit{$\bullet$ The case $G=\mathbb{G}_m$.}} Here we treat the case when the algebraic $k$-group $G$ is the multiplicative group $\mathbb{G}_m$.

There is an identification $\mathbb{Z} \overset{\sim}{\to} \widehat{\mathbb{G}_m}(k)$, $r \mapsto (x \mapsto x^r)$. We want to show that the image of the one-to-one homomorphism $R : \widehat{\res(\mathbb{G}_{m,k^\prime})}(k) \hookrightarrow \widehat{\mathbb{G}_m}(k)$ is exactly the subgroup $p^h \mathbb{Z}$.

First of all, thanks to Construction \ref{construction caracteres}, the integer $p^h$ is in the image of $R$.

Secondly, $\widehat{\res(\mathbb{G}_{m,k^\prime})}(k)$ is a subgroup of $\mathbb{Z}$ and has a generator $\rho$. The integer $r=R(\rho)$ is non-zero and we may assume that it is positive.

To show that $r=p^h$, write $r=p^\alpha s$ for some positive integer $s$ prime to $p$. Since $p^h \in r \mathbb{Z}= \mathrm{Im}(R)$, $s$ must be $1$ and $\alpha \leqslant h$. Thus, both homomorphisms ${(\cdot)^{p^h}, \; \rho^{p^{h-\alpha}} : \res(\mathbb{G}_m) \to \mathbb{G}_m}$ become equal when restricted to $\mathbb{G}_m \subset \res(\mathbb{G}_m)$, so they are equal. Looking at $k$-points we find \[ \forall x \in (k^\prime)^\times, \, x^{p^h}=\rho(x)^{p^{h-\alpha}}. \] But the Frobenius homomorphism is injective, so \[ \forall x \in (k^\prime)^\times, x^{p^\alpha} = \rho(x) .\] This shows that $({k^\prime}^\times)^{p^\alpha}=\rho({k^\prime}^\times) \subseteq k^\times$, whence $\alpha=h$.

In conclusion, Theorem \ref{thm caracteres restriction weil} is verified for the multiplicative group.

\paragraph{\textit{$\bullet$ The case $G=\mu_n$.}} We treat the case when $G=\mu_n$, the algebraic $k$-group of $n$-th roots of unity for some positive integer $n$. The group $\mu_n$ is a product of $\mu_{n_0}$ and $\mu_{p^{n_1}}$ where $n_0$ is prime to $p$ and $n_1$ is a non negative integer. We have thus to treat two cases : the case $n$ is prime to $p$ and $n$ is a $p$-power. Similarly to the previous point, there is an identification $\mathbb{Z}/n \mathbb{Z} \overset{\sim}{\to} \widehat{\mu_n}(k)$, $r \mapsto (x \mapsto x^r)$. We want to show that the image of the one-to-one homomorphism $R : \widehat{\res(\mu_n)}(k) \hookrightarrow \widehat{\mu_n}(k)$ is exactly the subgroup $p^h \mathbb{Z}/n \mathbb{Z}$. The case when $n$ is prime to $p$ is trivial as stated in Remark \ref{remarque caracteres}. So assume $n$ is equal to $p^m$ for some integer $m$.

From Construction \ref{construction caracteres}, we already know that the image of $R$ contains $p^h \widehat{G}(k)$. For the other inclusion, consider the commutative diagram with exact rows \[ \xymatrix{
1 \ar[r] & \res(\mu_{p^m,k^\prime}) \ar[r] & \res(\mathbb{G}_{m,k^\prime}) \ar[r] & \mathrm{R}_{k({k^\prime}^{p^m})/k}(\mathbb{G}_{m,k({k^\prime}^{p^m})}) \ar[r] & 1 \\
1 \ar[r] & \mu_{p^m} \ar[r] \ar[u] & \mathbb{G}_m \ar[r] \ar[u] & \mathbb{G}_m \ar[r] \ar[u] & 1
} \] where the surjective arrows are $p^m$-th power maps (the first row is exact by Lemma \ref{suite}. Applying \cite[Lem. 3.3]{Rosengarten_picard} and using the fact that $\mathrm{R}_{k({k^\prime}^{p^m})/k}(\mathbb{G}_{m,k({k^\prime}^{p^m})})$ has a trivial Picard group, lead to a commutative square \[ \xymatrix{
\widehat{\res(\mathbb{G}_{m,k^\prime})}(k) \ar@{->>}[r] \ar@{^{(}->}[d] & \widehat{G}_0(k) \ar@{^{(}->}[d]  \\
\widehat{\mathbb{G}_m}(k) \ar[r] & \widehat{G}(k)
} . \] Since Theorem \ref{thm caracteres restriction weil} is known for $\mathbb{G}_m$, this square tells us that that $\mathrm{Im}(R)$ is contained in $p^h \widehat{G}(k)$.

So Theorem \ref{thm caracteres restriction weil} holds for groups of roots of unity $\mu_n$.

\paragraph{\textit{$\bullet$ The case $G$ is diagonalisable.}} If $G$ is a diagonalisable $k$-group, then $G$ is a product of finitely many $\mathbb{G}_m$'s and $\mu_n$'s (groups of roots of unity). Theorem \ref{thm caracteres restriction weil} has been shown for $\mathbb{G}_m$ and for all $\mu_n$, whence the result for the diagonalisable $G$.

\paragraph{\textit{$\bullet$ The case $G$ is smooth of multiplicative type.}} Every smooth algebraic $k$-group $G$ of multiplicative type is diagonalisable over $k_s$. Thus, according to what has been done above, the image of \begin{equation} 
\label{morphisme sep} \widehat{G_0}(k_s) \to \widehat{G}(k_s) 
\end{equation} is $p^h \widehat{G}(k_s)$. Restricting the injective homomorphism \eqref{morphisme sep} to elements that are invariant under the absolute Galois group of $k$, we find that the image of $\widehat{G_0}(k) \to \widehat{G}(k)$ is $\left( \widehat{G}(k_s) \right) ^\Gamma \bigcap \left( p^h \widehat{G}(k_s) \right) $. Since $G$ is assumed to be smooth, $\widehat{G}(k_s)$ is a product of copies of $\mathbb{Z}$ and of groups $\mathbb{Z}/n\mathbb{Z}$ for integers $n$ coprime to $p$; thus the $p^h$-th power homomorphism of $\widehat{G}(k_s)$ is injective, and this implies that $ \widehat{G}(k) \bigcap \left( p^h \widehat{G}(k_s) \right) $ is equal to $p^h \widehat{G}(k)$ inside $\widehat{G}(k)$.

\paragraph{\textit{$\bullet$ The case $G$ is reductive and connected.}} The connected reductive group $G$ fits into an exact sequence of algebraic groups \begin{equation}
\label{balboa}
1 \to H \to G \to T \to 1
\end{equation} where $H$ is the semisimple derived group of $G$ and $T$ is a torus. The sequence \eqref{balboa} induces an exact sequence \[ 1 \to \widehat{T}(k) \to \widehat{G}(k) \to \widehat{H}(k).\] Since the group $H$ is perfect, $\widehat{H}(k)$ is trivial and one finds that the restriction homomorphism $\widehat{T}(k) \to \widehat{G}(k)$ is an isomorphism.

Also, applying \cite[Cor. A.5.4(3)]{CGP} to the sequence \eqref{balboa} yields an exact sequence \[
1 \to H_0 \to G_0 \to T_0 \to 1 \] where $G_0=\res(G_{k^\prime})$, $T_0=\res(T_{k^\prime})$ and $G_0=\res(G_{k^\prime})$. This induces an exact sequence \[ 1 \to \widehat{T_0}(k) \to \widehat{G_0}(k) \to \widehat{H_0}(k)  \] but $\widehat{H_0}(k)$ is trivial because extending scalars to $k^\prime$, $(H_0)_{k^\prime}$ becomes a semi-direct product of a unipotent $k^\prime$-group by the perfect group $H_{k^\prime}$. So $\widehat{T_0}(k) \simeq \widehat{G_0}(k)$.

Theorem \ref{thm caracteres restriction weil} applied to $T$ then tells us that the restriction homomorphism $\widehat{G_0}(k) \to \widehat{G}(k)$ is injective and identifies $\widehat{G_0}(k)$ with $p^h \widehat{G}(k) \subseteq \widehat{G}(k)$ as desired.

\begin{center}
\underline{\textit{Proof of Theorem \ref{thm caracteres restriction weil} - The end} \qed}
\end{center}


\end{document}